\def\eod{\vrule height 6pt width 5pt depth 0pt}
\newenvironment{proof}{\noindent {\bf Proof:} \hspace{.2em}}
                      {\hspace*{\fill}{\eod}}
\newcommand{\rowsp}{\mathrm{RowSpace}}
\newcommand{\rrow}{\mathrm{Row}}
\newcommand{\ccol}{\mathrm{Col}}
\newcommand{\LG}{\mathrm{LG}}
\newcommand{\rk}{\mathrm{rank}}
\newcommand{\BB}{\mathfrak{B}}
\newcommand{\DD}{\mathfrak{D}}
\newcommand{\Max}{\mathrm{Max4PC}}
\newcommand{\Min}{\mathrm{Min4PC}}
\newtheorem{theorem}{Theorem}
\newtheorem{corollary}[theorem]{Corollary}
\newtheorem{remark}[theorem]{Remark}
\newtheorem{example}[theorem]{Example}
\newtheorem{lemma}[theorem]{Lemma}
\newcommand{\comment}[1]{}
\def\bpsp{\begin{pspicture}}
\def\epsp{\end{pspicture}}
\def\be{\begin{equation}}
\def\ee{\end{equation}}
\def\ba{\begin{array}}
\def\ea{\end{array}}
\def\bdsc{\begin{description}}
\def\edsc{\end{description}}
\def\bnum{\begin{enumerate}}
\def\enum{\end{enumerate}}
\def\bea{\begin{eqnarray*}}
\def\eea{\end{eqnarray*}}
\def\ben{\begin{eqnarray}}
\def\een{\end{eqnarray}}
\def\btab{\begin{tabular}}
\def\etab{\end{tabular}}
\def\bmat{\begin{bmatrix}}
\def\emat{\end{bmatrix}}
\def\ni{\noindent}
\def\<{\langle}
\def\>{\rangle}
\newcommand\qsD{\raisebox{1.25ex}{$q$}\kern-.65ex \mathfrak{D}}
\newcommand\esD{\raisebox{1.25ex}{$\epsilon$}\kern-.6ex \mathfrak{D}}
\newcommand\qD{\raisebox{1.25ex}{$q$}\kern-.85ex D}
\newcommand\eD{\raisebox{1.25ex}{$\epsilon$}\kern-.6ex D}
\newcommand{\rank}{\mathtt{rank}}
\newcommand{\0}{\boldsymbol{0}}
\let\v\relax
\newcommand{\v}{\boldsymbol{v}}
\let\u\relax
\newcommand{\u}{\boldsymbol{u}}
\let\w\relax
\newcommand{\w}{\boldsymbol{w}}
\newcommand{\1}{\mathbf{1}}
\newcommand{\V}{\mathbb{V}}
\newcommand{\U}{\mathbb{U}}
\newcommandx{\unsure}[2][1=]{\todo[linecolor=red,backgroundcolor=red!25,bordercolor=red,#1]{#2}}
\newcommandx{\change}[2][1=]{\todo[linecolor=blue,backgroundcolor=blue!25,bordercolor=blue,#1]{#2}}
\newcommandx{\info}[2][1=]{\todo[linecolor=OliveGreen,backgroundcolor=OliveGreen!25,bordercolor=OliveGreen,#1]{#2}}
\newcommandx{\improvement}[2][1=]{\todo[linecolor=Plum,backgroundcolor=Plum!25,bordercolor=Plum,#1]{#2}}
\newcommandx{\thiswillnotshow}[2][1=]{\todo[disable,#1]{#2}}
\begin{document}

\title{The maximum four point condition matrix of a tree}

\author{}






\author{
        Ali Azimi \\ 
        Department of Mathematics and Applied Mathematics\\
        Xiamen University Malaysia, 43900 \\
        Sepang, Selangor Darul Ehsan, Malaysia \\
        email:  ali.azimi@xmu.edu.my, ali.azimi61@gmail.com
        \and 
        Rakesh Jana \\
        Department of Mathematics \\
        Indian Institute of Technology, Bombay\\
    Mumbai 400 076, India\\
        email: rjana@math.iitb.ac.in
        \and 
        Mukesh Kumar Nagar \\
        Department of Mathematics \\
        Punjab Engineering College\\
        Chandigarh 160 012, India\\
        email: mukesh.kr.nagar@gmail.com
        \and 
                Sivaramakrishnan Sivasubramanian \\
                Department of Mathematics\\
                Indian Institute of Technology, Bombay\\
                Mumbai 400 076, India\\
                email: krishnan@math.iitb.ac.in
                \and 
}

\date{March 17, 2023}

\comment{
\begin{keyword}
Steiner distance, tree, distance matrix, inertia
\MSC
05C05, 05C12 , 15A15.
\end{keyword}
}

\maketitle

\begin{abstract}
The Four point condition (4PC henceforth) is a well known 
condition characterising
distances in trees $T$.  Let $w,x,y,z$ be four vertices in $T$ and
let $d_{x,y}$ denote the distance between vertices $x,y$ in $T$.
The 4PC condition says that among the three terms 
$d_{w,x} + d_{y,z}$, $d_{w,y} + d_{x,z}$ and $d_{w,z} + d_{x,y}$ 
the maximum value  equals the second maximum value.

We define an $\binom{n}{2} \times \binom{n}{2}$ sized matrix
$\Max_T$ from a tree $T$ where the rows and columns are 
indexed by size-2 subsets.  The entry of $\Max_T$ corresponding
to the row indexed by $\{w,x\}$ and column $\{y,z\}$ is the 
maximum value among the three terms 
$d_{w,x} + d_{y,z}$, $d_{w,y} + d_{x,z}$ and $d_{w,z} + d_{x,y}$.
In this work, we determine basic properties of this matrix like 
rank, give an algorithm that outputs a family of bases, and 
find the determinant of $\Max_T$ when restricted to our
basis.  We further determine the inertia and the Smith
Normal Form (SNF) of $\Max_T$.
\end{abstract}





\section{Introduction}
\label{sec:intro}

Let $T = (V,E)$ be a tree on $n$ vertices.  Associated to $T$
are several matrices whose entries are functions of distance between the
vertices.  The most well studied of these is the 
$n \times n$ distance matrix $D_T$ of $T$ whose rows and columns are 
indexed by vertices of $T$.  The $(i,j)$-th entry of $D_T$ 
is $d_{i,j}$, 
the distance between vertex $i$ and vertex $j$ in $T$.
About fifty years ago, Graham and Pollak in \cite{graham-pollak-71} 
showed that the determinant of $D_T$ is independent of the 
structure of the tree $T$ and only depends on $n$, the number of 
vertices in $T$.  This result has inspired several 
generalizations (see for example \cite{bapat-jana-pati,
bapat-lal-pati, 
bapat-siva-lapl-tree, bapat-siva-product-dist-sq-dist, 
bapat-siva-second-immanant, bapat-siva-snf-product-dist,
bapat-siva-arithmetic-tutte,
graham-hoffman-hosoya, jana, siva-q-ghh}).  
These papers illustrate the
wealth of results concerning distances in trees.  
We refer the reader to the book \cite{bapat-book} by 
Bapat for a good introduction to such matrices.  
An important condition characterising distances in trees 
was given by Buneman in \cite{buneman-4PC} and is
called the {\it four-point condition} (henceforth denoted
as 4PC).
 

Fix a tree $T$ and denote the distance between vertices
$x,y$ in $T$ as $d_{x,y}$.
The 4PC states that for any four vertices $w,x,y$ and $z$
in $T$, among the three terms 
$d_{w,x} + d_{y,z}$, $d_{w,y} + d_{x,z}$ and $d_{w,z} + d_{x,y}$, 
the maximum value equals the second maximum
value.  In order to understand the 4PC in more
detail, Bapat and Sivasubramanian 
in \cite{bapat-siva-snf-4PC} studied the
$\binom{n}{2} \times \binom{n}{2}$ matrix $M_T$ whose 
rows and columns are indexed by pairs of distinct vertices.  
The entry in the row 
indexed by $\{w,x\}$ and column $\{y,z\}$ of $M_T$ equals the
{\it minimum value} among the three terms
$d_{w,x} + d_{y,z}$, $d_{w,y} + d_{x,z}$ and $d_{w,z} + d_{x,y}$.
They showed the surprising result that the rank of $M_T$ is 
independent of the structure of $T$ and only depends on $n$,
the number of vertices in $T$.  Among other results, they 
also gave the Smith Normal Form (henceforth SNF) of $M_T$. 
It is somewhat surprising that $D_T$, 
the distance matrix of $T$ and $M_T$, the min-4PC matrix
of $T$ have the same rank and the same invariant factors.
We term the matrix $M_T$ as the {\it minimum 4PC matrix} and
also denote it as $\Min_T$.
Analogously, in this work, we define $\Max_T$, the 
$\binom{n}{2} \times \binom{n}{2}$ {\it maximum 4PC matrix} whose
rows and columns are indexed by pairs of distinct vertices.  
The entry in the row 
indexed by $\{w,x\}$ and column $\{y,z\}$ of $\Max_T$ equals the
{\it maximum value} among the three terms
$d_{w,x} + d_{y,z}$, $d_{w,y} + d_{x,z}$ and $d_{w,z} + d_{x,y}$.

Related to this, Azimi and Sivasubramanian in
\cite{aliazimi-siva-steiner-2-dist} studied the 
2-Steiner distance matrix $\DD_2(T)$.  This is also an $\binom{n}{2}
\times \binom{n}{2}$ matrix with the entry in the row indexed by $\{w,x\}$ and
column indexed by $\{y,z\}$ being the number of edges in a minimum 
subtree of $T$
that contains the vertices $w,x,y$ and $z$.  For all positive integers
$k$, one can define $k$-Steiner distance matrices $\DD_k(T)$ and in 
\cite{aliazimi-siva-steiner-2-dist}, the authors show that
when $k=1$, $\DD_1(T) = D_T$ is the usual distance matrix.  Interestingly,
in \cite[Lemma 4]{aliazimi-siva-steiner-2-dist} they
showed that $\DD_2(T) = \frac{1}{2}\Big( \Max_T + \Min_T \Big)$.  Thus,
for any tree $T$, each entry of $\Max_T$ and $\Min_T$ have the 
same parity and their average is the 
corresponding entry of $\DD_2(T)$.

Thus, three $\binom{n}{2} \times \binom{n}{2}$ matrices are 
associated to a tree $T$: 
the maximum 4PC matrix (denoted $\Max_T$), 
the minimum 4PC matrix (denoted $\Min_T$)
and the average 4PC matrix (denoted as $\DD_2(T)$).  
Among these three matrices, results are known for 
two matrices.  See 
Bapat and Sivasubramanian  \cite{bapat-siva-snf-4PC} for results on
$\Min_T$ and see
Azimi and Sivasubramanian \cite{aliazimi-siva-steiner-2-dist} for results
on $\DD_2(T)$.  To the best of our knowledge,
there are no results on the third matrix, $\Max_T$.  
In this paper, we start filling this gap and study $\Max_T$ for 
a tree $T$.  Our first result about $\Max_T$ is the following.

\begin{theorem}
\label{thm:rank}
Let $T$ be a tree on $n\geq 3$ vertices having $p$ pendant vertices.
Then, $$\rk(\Max_T) = 2(n-p).$$
\end{theorem}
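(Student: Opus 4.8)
The plan is to reduce $\Max_T$ to an explicit sum of rank-one symmetric matrices by first rewriting each entry as a sum over the edges of $T$, and then reading the rank off that decomposition. For an edge $e$, deleting $e$ splits $V$ into two sides; write $s_e(v)\in\{0,1\}$ for the side of $v$, so that $d_{x,y}=\sum_e(s_e(x)-s_e(y))^2$. For any (possibly overlapping) pairs $\{i,j\}$ and $\{k,l\}$ put $k_e=s_e(i)+s_e(j)+s_e(k)+s_e(l)\in\{0,1,2,3,4\}$. Analysing the Steiner topology of four points (the ``$H$-shape'': two branch vertices joined by a bridge) shows that the edges cutting the four points as $(2,2)$ are exactly the bridge edges and each contributes $2$ to the maximum pairing sum, the $(1,3)$-edges are the arm edges and each contributes $1$, and all other edges contribute $0$. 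Since by the 4PC the maximum equals the second maximum, this accounts for the whole maximum and yields
\be
\Max_T[\{i,j\},\{k,l\}]=\medsum_e \min(k_e,4-k_e).
\ee
A short check on the five values of $k_e$ confirms that this same formula reproduces the diagonal entries $2d_{i,j}$ and the shared-vertex entries $d_{i,j}+d_{j,l}$, so it is valid for all rows and columns.

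Next I would linearise $\min(k_e,4-k_e)$. Writing $n_e(\{i,j\})=\1[e\text{ separates }i,j]$ and $\chi_S(\{i,j\})=\1[\{i,j\}\subseteq S]$, a finite check on the $3\times 3$ table of values of $\min(k,4-k)$ gives, for every edge with induced bipartition $\{S_e,\bar S_e\}$, the identity $\min(k_e,4-k_e)=n_e^{\mathrm{row}}+n_e^{\mathrm{col}}+2\big(\chi_{S_e}^{\mathrm{row}}\chi_{\bar S_e}^{\mathrm{col}}+\chi_{\bar S_e}^{\mathrm{row}}\chi_{S_e}^{\mathrm{col}}\big)$. Summing over $e$ and using $\sum_e n_e(\{i,j\})=d_{i,j}=:g(\{i,j\})$ produces
\be
\Max_T=\mathbf g\,\1^T+\1\,\mathbf g^T+2\medsum_{e}\big(\chi_{S_e}\chi_{\bar S_e}^T+\chi_{\bar S_e}\chi_{S_e}^T\big).
\ee
The crucial observation is that if $e$ is a pendant edge then one side of the cut is a single vertex, so the corresponding $\chi$ is the zero vector and $e$ contributes nothing to the sum; thus only the $(n-1)-p$ internal edges survive. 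Consequently the column space of $\Max_T$ is contained in the span of $\{\1,\mathbf g\}\cup\{\chi_{S_e},\chi_{\bar S_e}:e\text{ internal}\}$, which has dimension at most $2+2\big((n-1)-p\big)=2(n-p)$, giving $\rk(\Max_T)\le 2(n-p)$.

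The main obstacle is the matching lower bound $\rk(\Max_T)\ge 2(n-p)$, which is precisely the ``family of bases'' and determinant computation announced in the abstract. The plan is to exhibit an explicit set of $2(n-p)$ index pairs whose principal submatrix is nonsingular. Rooting $T$ and ordering the internal edges by depth, one assigns to each internal edge $e$ two pairs, one drawn from $S_e$ and one from $\bar S_e$, chosen so that a pair selected for $e$ is annihilated by $\chi_{S_{e'}}$ for every edge $e'$ strictly deeper in the tree; together with two further pairs detecting the $\{\1,\mathbf g\}$ directions this should make the generators linearly independent and render the selected submatrix block-triangular with invertible diagonal blocks, whose determinant can then be evaluated in closed form and checked to be nonzero. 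The delicate point, and the reason this step carries the real work, is that the pair-indicator vectors $\chi_S$ satisfy linear dependencies not present among the vertex-indicator vectors $\1_S$, so one must verify that representative pairs with the required triangularity exist for an \emph{arbitrary} tree and that the resulting determinant never vanishes.
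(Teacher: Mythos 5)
Your first two steps are correct and constitute a genuinely different route from the paper for \emph{half} of the theorem. The identity $\Max_T[\{i,j\},\{k,l\}]=\sum_e\min(k_e,4-k_e)$ does hold (including the diagonal and shared-vertex entries), the $3\times 3$ table verification of the linearization is right, and pendant edges indeed contribute nothing since the indicator $\chi_{\{v\}}$ of a singleton side is the zero vector on pairs of distinct vertices. This yields the clean bound $\rk(\Max_T)\le 2+2\big((n-1)-p\big)=2(n-p)$ with no induction at all, which is attractive. The paper instead proves the theorem by induction on $n$: it peels off a leaf and uses two row-relation lemmas (Lemma \ref{lem: one-in-rowspace}, which says $\rrow_{u,n}=\rrow_{u,n-1}+\1^t$, and Lemma \ref{lem: ell}) to carry out row and column operations reducing $\Max_T$ to $\Max_{\widehat T}$ together with an explicit block of rank $0$ or $2$; this single sweep gives the rank exactly, i.e.\ both inequalities at once.

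The genuine gap is the lower bound $\rk(\Max_T)\ge 2(n-p)$, which your proposal does not prove: your final paragraph is a plan, not an argument (``should make the generators linearly independent,'' ``one must verify\dots''), and you yourself flag the unresolved difficulty that pair-indicator vectors $\chi_S$ satisfy dependencies that vertex-indicator vectors do not. Since the upper bound is the easy direction in your framework, what is missing is precisely the content of the theorem. Note that within your own setup you do not need an explicit nonsingular principal submatrix (that harder task is what the paper's Section \ref{sec:basis} does separately for Theorem \ref{thm:det_value}). Your decomposition has the form $\Max_T=UAU^t$, where $U$ has the $2(n-p)$ columns $\1,\mathbf{g},\chi_{S_e},\chi_{\bar S_e}$ ($e$ internal) and $A$ is block diagonal with $2\times 2$ anti-diagonal blocks (entries $1$ for the $\{\1,\mathbf{g}\}$ block and $2$ for each edge block), hence invertible. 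So if $U$ has full column rank, a left inverse $L$ of $U$ gives $A=L\,(UAU^t)\,L^t$, whence $\rk(\Max_T)\ge\rk(A)=2(n-p)$. Thus the whole theorem reduces to showing that $\1$, $\mathbf{g}$, and the vectors $\chi_{S_e},\chi_{\bar S_e}$ over internal edges are linearly independent in $\RR^{\binom{n}{2}}$ --- a concrete, plausibly tree-inductive statement, but one that still requires proof and that your proposal leaves open.
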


For a matrix $M$, let $P,Q$ be subsets of the row and column 
indices respectively.  By $M(P,Q)$ we denote the submatrix of $M$ 
obtained by deleting the rows in $P$ and columns in $Q$.  By $M[P,Q]$
we denote the submatrix of $P$ obtained by restricting $M$ to the
rows in $P$ and the columns in $Q$.

We determine a class of bases $\BB$ of the row space of $\Max_T$ and
for each $B \in \BB$, we 
determine the determinant of the submatrix $\Max_T[B,B]$ 
of $\Max_T$ induced on the rows and columns in $B$.  
Our basis $B$ is constructed using a depth-first search type 
traversal of $T$.  Our algorithm depends on a starting leaf vertex,
and there are further choices as well in the execution of 
our algorithm.  Thus, our output basis $B$ will depend on
these choices and is hence not unique.  Nonetheless, the determinant
of $\Max_T$ when restricted to the rows and columns of 
all such constructed bases has a clean formula which is our 
next result.  


\begin{theorem}
\label{thm:det_value}
Let $B$ be a basis for the row space of $\Max_T$ that is
output by the algorithm described in Lemma \ref{rem:algo}.  Then,
$$\det \Max_T[B,B] 
= (-1)^{n-p}2^{2(n-p-1)}.$$  
\end{theorem}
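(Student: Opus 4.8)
The plan is to compute $\det \Max_T[B,B]$ by exploiting the combinatorial structure of the basis $B$ produced by the depth-first traversal, rather than attacking the full $\binom{n}{2} \times \binom{n}{2}$ matrix directly. Since the algorithm of Lemma \ref{rem:algo} builds $B$ incrementally via a DFS-type traversal, I expect the basis rows to be naturally ordered so that the principal submatrix $\Max_T[B,B]$ admits a convenient block or triangular structure. The first step is to understand precisely what a row indexed by a pair $\{w,x\} \in B$ looks like when paired against a column $\{y,z\} \in B$: the entry is $\max(d_{w,x}+d_{y,z},\ d_{w,y}+d_{x,z},\ d_{w,z}+d_{x,y})$, and because $T$ is a tree, this maximum is governed by the Steiner-tree geometry of the four points $w,x,y,z$. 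I would first record a clean closed form for this max-entry in terms of distances along the tree, which should reveal that the entries of $\Max_T[B,B]$ depend in a controlled way on how the pairs in $B$ are nested along the traversal.

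The key idea is to perform row and column operations that respect the DFS order. Because $\rk(\Max_T) = 2(n-p)$ by Theorem \ref{thm:rank}, and $|B| = 2(n-p)$, the matrix $\Max_T[B,B]$ is exactly a nonsingular matrix of the full rank, so no degeneracy interferes. I would next show that subtracting consecutive basis rows (and correspondingly columns), as dictated by the order in which the algorithm appends pairs, collapses the max-entries into a much sparser pattern. The expectation is that after these elementary operations the matrix becomes block-triangular (or reduces to a tensor/Kronecker-like product of small blocks), where each block corresponds to an edge of $T$ or to a branching step in the traversal, and each such block contributes a factor of $\pm 2$. Tracking the $n - p$ internal vertices (equivalently the non-pendant structure captured by Theorem \ref{thm:rank}), one anticipates $2(n-p-1)$ factors of $2$ in absolute value, matching the exponent in the claimed determinant, and a global sign $(-1)^{n-p}$ arising from the reordering/row-swaps incurred during the reduction.

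Concretely, I would proceed by induction on the tree $T$, peeling off a leaf or a pendant path at a time. The base case is a small tree (for instance a path $P_3$ or a star on three vertices), where $\Max_T[B,B]$ can be written down explicitly and the determinant $(-1)^{n-p}2^{2(n-p-1)}$ verified by hand. For the inductive step, removing a pendant vertex changes $n-p$ in a predictable way and, crucially, the algorithm's output $B$ for the smaller tree should be compatible with the restriction of $B$ for $T$; I would argue that the new basis rows/columns introduced by the extra vertex form a $2 \times 2$ (or constant-size) border that, after the row/column operations above, is block-triangular with respect to the old matrix and contributes exactly the factor $(-1)\cdot 2^2 = -4$, i.e. multiplies $n-p$ by one and the determinant by $-4 = (-1)^1 2^{2\cdot 1}$, which is precisely the increment demanded by the formula.

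The main obstacle I anticipate is controlling the sign and the precise form of the bordering block across the inductive step, because the max-operation makes the entries genuinely nonlinear in the underlying distances, so the cancellations produced by the row/column differences are not automatic and depend delicately on the relative positions of the four vertices along the traversal. Establishing the right closed form for the max-entry of four points in a tree, and verifying that the DFS order always makes the newly added pairs interact with old pairs in a block-triangular way (rather than creating off-diagonal fill-in that destroys the clean factorization), will require a careful case analysis of how the Steiner tree of $\{w,x,y,z\}$ decomposes. Once that structural lemma is in hand, the determinant computation itself should reduce to bookkeeping of the $\pm 2$ factors.
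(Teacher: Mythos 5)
Your proposal takes essentially the same route as the paper's own proof (items c and d of Theorem \ref{th: det}): induction on $n$, peeling off a pendant vertex chosen on a diameter path, writing $\Max_T[B,B]$ as the smaller matrix $\Max_{T-x}[\widehat B,\widehat B]$ bordered by two rows/columns, and cleaning the border by row/column operations so that it contributes exactly the factor $(-1)\cdot 2^2=-4$ per step, with the base case (star/$P_3$) giving $-1$. The structural control you anticipate needing over the max-entries is supplied in the paper not by a closed form but by the two row-relation lemmas (Lemmas \ref{lem: one-in-rowspace} and \ref{lem: ell}) already proved for the rank theorem, together with a case split on whether the deleted leaf's neighbour carries another leaf (handled via Corollary \ref{cor:remove extra leaf}, where $n-p$ and the determinant are both unchanged).
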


As mentioned earlier, the invariant factors and hence the 
SNF of $\Min_T$ were found by Bapat and Sivasubramanian in 
\cite[Theorem 2]{bapat-siva-snf-4PC}. 
As a counterpart, in Theorem \ref{thm:snf-max-4pc},
we determine the SNF  of $\Max_T$.   
In \cite[Theorem 18]{aliazimi-siva-steiner-2-dist}, the authors
showed that $\DD_2(T)$ has exactly one positive eigenvalue,
$2n-p-2$ negative eigenvalues and the rest of its eigenvalues
are 0.  If we denote the inertia of a real, symmetric matrix $M$
by the triple $(n_0, n_+, n_-)$, where $n_0$ is the nullity 
of $M$, $n_+$ is the number of positive eigenvalues and 
$n_-$ is the number of negative eigenvalues, then $\DD_2(T)$
has inertia $\Big( \binom{n}{2} - 2n+p+1,1,2n-p-2\Big)$.  
In Theorem \ref{thm:inertia_max4pc}, 
we determine the inertia of $\Max_T$ and show that it has 
$n-p$ positive  eigenvalues and $n-p$ negative eigenvalues.  
Thus Theorem \ref{thm:inertia_max4pc} 
refines Theorem \ref{thm:rank} by
giving the number of positive and negative eigenvalues.

\section{Rank of $\Max_T$}
\label{sec:rank}

Towards proving Theorem \ref{thm:rank}, we start with the following lemmas.
For four vertices $u,v,w,x \in V(T)$, denote by 
$\Max_T(\{u,v\}, \{w,x\})$ the entry of $\Max_T$ indexed by the 
row $\{u,v\}$ and column $\{w,x\}$.  Further, we denote the path
between vertices $u,v$ in $T$ as the {\it $u$-$v$ path}.

\begin{lemma}\label{lem: one-in-rowspace}
Let $T$ be a tree on $n$ vertices.  
Suppose $n$ is a pendant vertex of $T$ with a unique 
neighbour $n-1$.  
Let $u$ be a vertex of $T$ other than $n$ and $n-1$.
Then, for all unordered pairs of distinct vertices $\{i,j\}$, we have  
$$
\Max_T(\{u,n\}, \{i,j\}) = \Max_T(\{u,n-1\}, \{i,j\})+1.
$$
\end{lemma}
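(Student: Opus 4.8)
The plan is to reduce everything to the single structural fact that, since $n$ is a pendant vertex whose only neighbour is $n-1$, every path from a vertex $v \neq n$ to $n$ must pass through $n-1$; hence $d_{v,n} = d_{v,n-1} + 1$ for all $v \neq n$. With this identity in hand, I would expand both sides of the claimed equality using the definition of $\Max_T$, namely that $\Max_T(\{u,n\}, \{i,j\})$ is the maximum of the three terms $d_{u,n} + d_{i,j}$, $d_{u,i} + d_{n,j}$, $d_{u,j} + d_{n,i}$, and similarly with $n$ replaced by $n-1$. The argument then splits into two cases according to whether the column pair $\{i,j\}$ contains the vertex $n$.

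\textbf{Case 1: $n \notin \{i,j\}$.} Here $u, i, j$ are all distinct from $n$, so the identity $d_{v,n} = d_{v,n-1}+1$ applies to every distance involving $n$ that appears. I would check term by term that the three quantities defining $\Max_T(\{u,n\}, \{i,j\})$ are obtained from the three quantities defining $\Max_T(\{u,n-1\}, \{i,j\})$ by adding exactly $1$ to each, the ``$+1$'' arising from the single occurrence of $n$ in each term. Since all three terms shift by the common constant $1$, the maximum also shifts by $1$, which is precisely the claim.

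\textbf{Case 2: $n \in \{i,j\}$.} By the symmetry of the unordered pair $\{i,j\}$ (swapping $i$ and $j$ merely permutes the three defining terms), I may assume $j = n$, so $i \neq n$. This is the delicate case, because now the substitution $d_{v,n}=d_{v,n-1}+1$ cannot be applied uniformly: some distances collapse to $d_{n,n} = 0$ or $d_{n-1,n} = 1$, so the three terms no longer shift by a common constant. Instead I would evaluate both maxima explicitly. On the left, two of the three terms coincide and dominate the third by the triangle inequality, giving $\Max_T(\{u,n\}, \{i,n\}) = d_{u,n} + d_{i,n}$. On the right, after using $d_{u,n} = d_{u,n-1}+1$ and $d_{i,n} = d_{i,n-1}+1$, two terms again coincide at $d_{u,n-1}+d_{i,n-1}+1$ and dominate the term $d_{u,i}+1$, again by the triangle inequality $d_{u,i} \le d_{u,n-1}+d_{n-1,i}$, so $\Max_T(\{u,n-1\}, \{i,n\}) = d_{u,n-1}+d_{i,n-1}+1 = d_{u,n}+d_{i,n}-1$. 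Comparing the two gives the desired difference of $1$.

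I expect Case 2 to be the main obstacle, precisely because the uniform ``shift all three terms by $1$'' argument of Case 1 breaks down there; the resolution is that the triangle inequality forces the two non-collapsing terms to attain the maximum on each side, after which the pendant-edge identity finishes the computation. The degenerate subcases $i = u$ (where $\{i,j\}=\{u,n\}$ is the diagonal entry) and $i = n-1$ need no separate treatment, as they are covered by the same explicit evaluation.
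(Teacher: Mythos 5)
Your proposal is correct and follows essentially the same route as the paper's proof: the pendant identity $d_{v,n}=d_{v,n-1}+1$, a case split on whether $n\in\{i,j\}$, a uniform shift of all three terms when $n\notin\{i,j\}$, and an explicit evaluation of both maxima via the triangle inequality when $j=n$ (the paper phrases the needed dominance as the strict inequality $d_{u,n}+d_{i,n}>1+d_{u,i}$, which is just your triangle-inequality step rewritten through the pendant identity). Your added observation that the degenerate subcases $i=u$ and $i=n-1$ require no separate treatment is also consistent with the paper's computation.
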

\begin{proof}
Recall that $u\neq n-1,n$. Therefore, when $v\neq n$, 
the $v$-$n$ path in $T$ must contain the vertex $n-1$. Thus, we have  
\be \label{eq: e0}
d_{v,n} =d_{v,n-1}+1  
\quad \text{and hence}\quad  
d_{u,n} =d_{u,n-1}+1.
\ee

Let $1\leq i<j\leq n$. Then by the definition of $\Max_T$, we have
\be \label{eq: e1}
\Max_T(\{u,n\}, \{i,j\}) = \max\{ d_{u,n-1}+d_{i,j}+1,\ d_{u,i}+d_{n,j},\ d_{u,j}+d_{n,i}\}.
\ee

\ni We split the proof into two cases with the first case being
when both $i\neq n$ and $j \not= n$.   In this case, by \eqref{eq: e0} it follows that 
\begin{align*}
\Max_T(\{u,n\}, \{i,j\}) &= \max\{ d_{u,n-1}+d_{i,j}+1,\ 
d_{u,i}+d_{n-1,j}+1,\ d_{u,j}+d_{n-1,i}+1
\} \\
&= \Max_T(\{u,n-1\}, \{i,j\}) +1 .
\end{align*}

\ni The second case is when exactly one of $i,j$ equals $n$.  Let $j=n$ and 
hence $i\leq n-1$. By the triangle inequality, 
we have
\be \label{eq: e3}
d_{u,n-1}+d_{i,n-1}\geq  d_{u,i} \quad \text{and}\quad  d_{u,n}+d_{i,n}>  1 + d_{u,i} .
\ee 

\ni Therefore, by \eqref{eq: e3}, we have
$$
\Max_T(\{u,n\}, \{i,n\}) = \max\{ d_{u,n}+d_{i,n},\ d_{u,i}\} =d_{u,n}+d_{i,n}.
$$

\ni Further, note that 
$$\begin{aligned}
\Max_T(\{u,n-1\}, \{i,n\}) & = \max\{ d_{u,n-1}+d_{i,n},\ d_{u,i}+1,\  d_{u,n}+d_{i,n-1}\}\\
&= \max\{
d_{u,n}+d_{i,n}-1,\ d_{u,i}+1,\  d_{u,n}+d_{i,n}-1
\} && [\text{by }\eqref{eq: e0}]\\
&= d_{u,n}+d_{i,n}-1 && [\text{by } \eqref{eq: e0} \text{ and } \eqref{eq: e3}] \\
&=\Max_T(\{u,n\}, \{i,n\}) -1.
\end{aligned}
$$
This completes the proof.
\end{proof}

\begin{lemma}
\label{lem: ell}
Let $T$ be a tree on $n$ vertices.  
Suppose $p,q\in V(T)$ such that $p$ is a pendant vertex of $T$ with $q$ 
being the quasi-pendant vertex adjacent to $p$. Let $u\in V(T)$ be a 
neighbour of $q$  other than $p$ and $B_u$ be the connected 
component of $T-q$ that contains the vertex $u$. Then,
$$
\Max_T(\{p,q\}, \{i,j\}) = \begin{cases}
\Max_T(\{u,q\}, \{i,j\})+2 & \text{ if } i,j\in B_u,\\
\Max_T(\{u,q\}, \{i,j\}) & \text{otherwise}.
\end{cases}$$
\end{lemma}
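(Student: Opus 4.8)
The plan is to reduce the computation of both entries $\Max_T(\{p,q\},\{i,j\})$ and $\Max_T(\{u,q\},\{i,j\})$ to a common closed form and then compare them. The key structural fact I would use is that, since $p$ is a pendant vertex adjacent to $q$ and $u$ is the \emph{unique} neighbour of $q$ lying in the component $B_u$ of $T-q$, every distance from $p$ or from $u$ can be rewritten in terms of distances from $q$. Concretely, for every vertex $i$,
$$d_{p,i} = d_{q,i} + \delta_i, \qquad d_{u,i} = d_{q,i} + \epsilon_i,$$
where $\delta_i = -1$ if $i=p$ and $\delta_i = +1$ otherwise, and $\epsilon_i = -1$ if $i \in B_u$ and $\epsilon_i = +1$ otherwise. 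I would first justify these identities: since $p$ is a leaf at $q$, the $p$-$i$ path passes through $q$ whenever $i\neq p$; and since $B_u$ meets $q$ along the single edge $qu$ (there is no other edge from $q$ into $B_u$, as $T$ is acyclic), the $q$-$i$ path runs through $u$ exactly when $i\in B_u$, while the $u$-$i$ path runs through $q$ otherwise. In particular $d_{p,q}=d_{u,q}=1$.

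Next I would substitute these identities into the defining maximum of three terms. For the row $\{p,q\}$ this gives
$$\Max_T(\{p,q\},\{i,j\}) = \max\bigl\{\, 1 + d_{i,j},\ d_{q,i}+d_{q,j}+\max(\delta_i,\delta_j)\,\bigr\},$$
and the analogous expression with $\epsilon$ in place of $\delta$ holds for the row $\{u,q\}$. Because $i,j$ are distinct, at most one of them equals $p$, so $\max(\delta_i,\delta_j)=+1$ always; combined with the tree triangle inequality $d_{i,j}\leq d_{q,i}+d_{q,j}$, this collapses the first expression to the single clean value $\Max_T(\{p,q\},\{i,j\}) = d_{q,i}+d_{q,j}+1$, valid for every pair $\{i,j\}$.

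It then remains to evaluate $\max(\epsilon_i,\epsilon_j)$. When at least one of $i,j$ lies outside $B_u$ (the ``otherwise'' case) we have $\max(\epsilon_i,\epsilon_j)=+1$, and the same triangle-inequality argument gives $\Max_T(\{u,q\},\{i,j\}) = d_{q,i}+d_{q,j}+1$, matching the $\{p,q\}$ entry exactly. When both $i,j\in B_u$ we have $\max(\epsilon_i,\epsilon_j)=-1$, so the offset drops to $d_{q,i}+d_{q,j}-1$; the subtle point, and the main obstacle, is to check that the term $1+d_{i,j}$ does not overtake this smaller offset. This is precisely where I would use that both the $q$-$i$ and $q$-$j$ paths pass through $u$, so that $d_{q,i}+d_{q,j} = 2 + d_{u,i}+d_{u,j} \geq 2 + d_{i,j}$, whence $d_{q,i}+d_{q,j}-1 \geq 1 + d_{i,j}$ and therefore $\Max_T(\{u,q\},\{i,j\}) = d_{q,i}+d_{q,j}-1$. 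Subtracting the two entries then yields a difference of exactly $2$ in this case and $0$ otherwise, which is the claimed dichotomy. The only care needed throughout is the bookkeeping of boundary vertices $i,j\in\{p,q,u\}$, but these are absorbed automatically by the uniform sign conventions $\delta_i,\epsilon_i$, so that no separate casework is required.
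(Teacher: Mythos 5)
Your proof is correct, but it takes a genuinely different route from the paper's. The paper compares the two rows entry by entry through explicit case analysis: it treats $i,j\in B_u$; $i=p$; $i\notin B_u\cup\{p\}$ with $j\in B_u$; both $i,j$ outside $B_u$; and $j=p$ as separate cases, in each one matching up the three terms of the two maxima using ad hoc inequalities such as $d_{i,j} < d_{u,i}+d_{q,j}$. You instead evaluate both entries in closed form: writing $d_{p,i}=d_{q,i}+\delta_i$ and $d_{u,i}=d_{q,i}+\epsilon_i$ with uniform signs $\delta_i,\epsilon_i\in\{\pm 1\}$, you collapse each maximum of three terms to $\max\{1+d_{i,j},\, d_{q,i}+d_{q,j}+\max(\delta_i,\delta_j)\}$ (and likewise with $\epsilon$) and resolve it with the triangle inequality, obtaining $\Max_T(\{p,q\},\{i,j\})=d_{q,i}+d_{q,j}+1$ unconditionally and $\Max_T(\{u,q\},\{i,j\})=d_{q,i}+d_{q,j}\pm 1$ according to whether one of $i,j$ lies outside $B_u$. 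The one delicate step --- checking that $1+d_{i,j}$ does not exceed $d_{q,i}+d_{q,j}-1$ when both $i,j\in B_u$ --- you handle correctly via $d_{q,i}+d_{q,j}=2+d_{u,i}+d_{u,j}\geq 2+d_{i,j}$, which uses exactly the structural fact that the only edge leaving $B_u$ is $uq$. What your approach buys is uniformity: the sign conventions absorb all boundary cases ($i$ or $j$ equal to $p$, $q$ or $u$) that the paper must treat separately, and it yields explicit formulas for the full rows indexed by $\{p,q\}$ and $\{u,q\}$, which is strictly more information than the claimed relation between them. What the paper's casework buys is that every step is a direct comparison of the three defining terms, with no need to first establish a closed form --- more elementary, at the cost of length and of several near-identical cases.
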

\begin{proof}
Clearly, for each $i\in T$ and $j\in B_u$, it follows by triangle inequality that 
\be \label{eq: el1}
d_{i,j}< d_{u,i}+d_{q,j}.
\ee

\begin{figure}[h]
\centerline{\includegraphics[scale = 0.9]{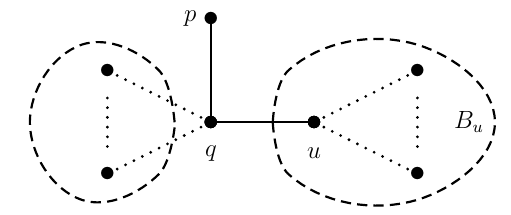}}
\caption{Illustrating Lemma \ref{lem: ell}}
\label{fig:illustr}
\end{figure}


\ni Let us first assume $i,j\in B_u$. 
Clearly $d_{p,v}=d_{u,v}+2$ for each $v\in B_u$. Therefore, it follows that 
\begin{align*}
\Max_T(\{p,q\}, \{i,j\})  & = \max\{d_{p,q}+d_{i,j},\ d_{p,i}+d_{q,j},\ d_{p,j}+d_{q,i}\} \\
& =  \max\{1+d_{i,j},\ d_{u,i}+d_{q,j}+2,\ d_{u,j}+d_{q,i}+2\} \\
& =  \max\{1+d_{i,j}, d_{u,i}+d_{q,j},\ d_{u,j}+d_{q,i}\}+2 && [\text{by } \eqref{eq: el1}]\\
&= \max\{ d_{u,q}+d_{i,j}, \ d_{u,i}+d_{q,j},\ d_{u,j}+d_{q,i}\} +2 \\
&= \Max_T(\{u,q\}, \{i,j\})+2. 
\end{align*}
In the third last line above, we have used the easy to
prove inequality that $1+d_{i,j}$ is smaller than both 
$d_{u,i} + d_{q,j}$ and $d_{u,j} + d_{q,i}$.
We now assume that $i\notin B_u$ and $j\in T$. Note that if $i=p$ 
and $j\in T-p$ then $d_{p,j}+d_{u,q} = d_{q,j}+ d_{p,u}$. It follows that
\begin{align*}
	\Max_T(\{p,q\}, \{p,j\})  & = \max\{d_{p,q}+d_{p,j},\ d_{p,p}+d_{q,j},\ d_{p,j}+d_{p,q}\} \\
	& = \max\{d_{u,q}+d_{p,j},\ d_{p,j}+d_{q,u}\} && [\mbox{as } d_{q,j} < d_{p,j}; \; d_{p,q} = d_{u,q}] \\
	& = \max\{d_{u,q}+d_{p,j},\ d_{p,u}+d_{q,j},\ d_{p,j}+d_{q,u}\} && [\mbox{as } d_{p,u} + d_{q,j} = d_{u,q}+ d_{p,j}]\\
	&= \Max_T(\{u,q\}, \{p,j\}).
\end{align*}

We split the remaining part of the proof into two cases with the
first case being when $i\notin B_u\cup\{p\}$ and $j\in B_u$. 
Clearly, in this case, $d_{i,j}=d_{i,q}+d_{q,u}+d_{u,j}$, and so we get
\be 
d_{u,i}+d_{q,j}=d_{i,j}+1> d_{u,j}+d_{q,i}=d_{i,j}-1. \label{eq: x}
\ee 
 
Therefore, we have 
\begin{align*}
	\Max_T(\{p,q\}, \{i,j\})  
	& =  \max\{1+d_{i,j},\ d_{u,i}+d_{q,j},\ d_{u,j}+2+d_{q,i}\} && [\mbox{as } d_{p,i} =d_{u,i}] \\
	& = \max\{d_{u,q}+d_{i,j},\ d_{u,i}+d_{q,j},\ d_{u,j}+d_{q,i}\}  && [\text{ by } \eqref{eq: x}]\\
	& = \Max_T(\{u,q\}, \{i,j\}).
\end{align*}

\ni  Our second case, is when $i\notin B_u\cup\{p\}$ and $j\notin B_u$.

Note that if $j\neq p$ then $d_{p,i}=d_{u,i}$, $d_{p,j}=d_{u,j}$ and so it follows that 
\begin{align*}
	\Max_T(\{p,q\}, \{i,j\})  
	& = \max\{d_{u,q}+d_{i,j},\ d_{u,i}+d_{q,j},\ d_{u,j}+d_{q,i}\} \\
	& = \Max_T(\{u,q\}, \{i,j\}) 
\end{align*}

Finally, let us assume $j=p$ and so $i\notin B_u\cup\{p\}$.  Clearly, $d_{p,i}=d_{u,i}$. Therefore, we get
\begin{align*}
	\Max_T(\{p,q\}, \{i,p\})  & = \max\{d_{p,q}+d_{i,p},\ d_{p,i}+d_{q,p},\ d_{p,p}+d_{q,i}\} \\
	& = \max\{d_{u,q}+d_{i,p},\ d_{p,i}+d_{q,p}\} && [\mbox{as } d_{q,i} < d_{p,i}] \\
	& = \max\{d_{u,q}+d_{i,p},\ d_{q,i}+d_{u,p}, \ d_{u,i}+d_{q,p}\} 
	\\
	&= \Max_T(\{u,q\}, \{i,p\}).
\end{align*}
This completes the proof.
\end{proof}

%



With the two lemmas above, we are now ready to prove our main 
result of this section.

\begin{proof}(Of Theorem \ref{thm:rank})
We use induction on $n$, the number of vertices in the tree $T$. When 
$n=3$, the only tree is $P_3$, the path on three vertices. It can be 
easily verified that  
$\Max_{P_3}= \left[\begin{array}{rrr}
	2 & 3 & 2 \\
	3 & 4 & 3 \\
	2 & 3 & 2
\end{array}\right]$ and $\rank(\Max_{P_3})=2$. Therefore, the result is true 
for all trees on three vertices. 

Assume that the result is true for all trees on $n-1$ vertices.  
Let $T$ be  a tree on $n$ vertices. 
Without loss of any generality, let $n$ be a pendant vertex that is 
adjacent to $n-1$. 
Let $\widehat T$ be the tree obtained by deleting the vertex $n$ from $T$.  
We divide the proof into two cases based on the degree of vertex $n-1$ in $T$. 

{\bf Case I: There exists a quasi-pendant vertex with degree two.}
We relabel the vertices of $T$ if necessary. We assume that $n$ is 
a leaf of $T$ adjacent to $n-1$ and that $n-1$ has degree 2.
Let $n,n-2$ be the two neighbors 
of $n-1$.   
Let $\widehat T$ be the tree obtained from $T$ by deleting the 
vertex $n$ from $T$.

Let $\V_n$ be the collection of all $2$-size unordered
subsets of $[n]:=\{1,2,\ldots,n\}$ with distinct elements and let 
$\U_{n-1} =\Big\{\{i,n\}:  i\in [n-1]\Big\}$.  We order the elements of $\V_n$ as
$\V_n = \Big(\V_{n-1}, \U_{n-1}\Big)$ and use this order of pairs to
index rows and columns of $\Max_T$. We thus write 
$\Max_T$ in partitioned form as 
 $$
 \Max_T =\begin{bmatrix}
 	\Max_{\widehat T} & \Max_{12}\\
 	\Max_{12}^t & \Max_{22}
 \end{bmatrix},
$$
where $\Max_{12}=\Max_T[\V_{n-1}, \U_{n-1}]$ and $\Max_{22} = 
\Max_T[\U_{n-1}, \U_{n-1}]$.

For a pair $\{u,v\}$ of distinct vertices in $V$, denote 
the row (column) of $\Max_T$ indexed by $\{u,v\}$ as 
$\rrow_{u,v}$ (as $\ccol_{u,v}$ respectively).
We perform the following row and column operations. For 
$1 \leq i < n-1$, 
perform $\rrow_{i,n} = \rrow_{i,n} - 
\rrow_{i,n-1}$ and also perform $\ccol_{i,n} = 
\ccol_{i,n} - \ccol_{i,n-1}$.  If performing row and 
column operations on $M$ gives us the matrix $N$, we 
denote this by $M \sim N$. 
By Lemma \ref{lem: one-in-rowspace}, we get 
$$\begin{aligned}
\Max_T 
&\sim 
\left[ 
\ba{c|c|c} 
\Max_{\widehat T} & 
\ba{cccc}
0 & \cdots& 0 & 1 \\
\vdots & \ddots & \vdots&\vdots\\
0 & \cdots &0 & 1 \\
\ea & 
\u   \\[1.5ex]\hline 
\ba{ccc}
0 & \cdots& 0  \\
\vdots & \ddots & \vdots\\
0 & \cdots &0  \\
1 & \cdots &1  \\
\ea & 
\0  & \ba{c} 0 \\ \vdots \\ 0 \\1  \ea \\[1.5ex]\hline 
\u^t & \ba{cccc}0 & \cdots 0 & 1\ea & 2
\ea\right].
\end{aligned}
$$


Denote the row indexed by $\{u,v\}$ in $\Max_{\widehat T}$ as
$\rrow_{\widehat T}(u,v)$. 
In $\widehat T$, let vertex $n-2$ be adjacent to vertices 
$n-1$ and $n-3$.  Note that we only need the degree of $n-2$ 
in $\widehat T$ to be at least two, not exactly two. Since 
vertex $n-1$ is a pendant vertex in $\widehat T$, by 
Lemma \ref{lem: one-in-rowspace},  for all 
$v\in \widehat T-\{n-1,n-2\}$, we get 
$\rrow_{\widehat T}(n-1, v) = \rrow_{\widehat T}(n-2, v)+\1^t.$

Further, note that $\Max_T(\{n-1,n\}, \{n-1,n-3\})=3$ and 
$\Max_T(\{n-1,n\}, \{n-2,n-3\}) = 4$.
Hence, by performing the row operation 
$\rrow_{n-2, n}=\rrow_{n-2, n}- \rrow_{n-1, n-3}+
\rrow_{n-2, n-3}$ and 
$\ccol_{n-2, n}=\ccol_{n-2, n}-
\ccol_{n-1, n-3}+\ccol_{n-2, n-3}$, we get


 $$\begin{aligned}
	\Max_T &\sim 
	\left[ 
	\ba{c|c|c} 
	\Max_{\widehat T} & 
	\0 &\0   \\\hline 
	\0 & 	\0  & \0\\\hline 
	\0 & \0 & 2 \\\hline 
	\0 & \ba{cccc}\0 & 2\ea & 2
	\ea\right]. 
\end{aligned}$$
This completes the proof of case I. 

{\bf Case II: All quasi-pendant vertices in $T$ have degree at least three:} 
Let $(v_1, \ldots, v_k)$ be a path whose length is equal to the 
diameter of $T$. 
Clearly $v_1$ is a pendant vertex and $v_2$ is a quasi-pendant vertex in $T$. 
As all quasi pendant vertices have degree at least three, $v_2$ 
has another pendant vertex $p$ other than $v_1$ adjacent to it. 
By relabelling, we assume that $v_1 = n$ and $p=n-1$ are two pendant vertices 
in $T$ adjacent to $v_2 = n-2$.  Further, as $n-2$ has degree at least 
three, let $n-3$ be adjacent to $n-2$. 
By Lemma \ref{lem: one-in-rowspace}, 


$$
\rrow_{i,n} = \rrow_{i,n-1}= \rrow_{i,n-2}+\1^t, \qquad \text{for each } i\neq n-2.
$$

Let $B_{n-3}$ be the connected component 
of $T-\{n-2\}$ that contains the vertex $n-3$.  By 
Lemma \ref{lem: ell}, we get

\begin{align*}
	\Max_T(\{n-2,n\},\{i,j\}) &= \begin{cases}
		\Max_T(\{n-3,n-2\},\{i,j\})+2 & \text{ if } i,j\in B_{n-3}\\
		\Max_T(\{n-3,n-2\},\{i,j\}) & \text{otherwise}
	\end{cases}  \\
&=  \Max_T(\{n-2,n-1\}, \{i,j\}),\qquad \text{for each } 1\leq i<j\leq n. 
\end{align*}

Hence, by performing the row operation $\rrow_{i, n}=\rrow_{i, n}-
\rrow_{i, n-2} - \rrow_{n-3, n-1}+\rrow_{n-3, n-2}$ and 
$\ccol_{i, n}=\ccol_{i, n}-\ccol_{i, n-2} - \ccol_{n-3, n-1}+
\ccol_{n-3, n-2}$, when $i\neq n-2$ and  
$\rrow_{n-2, n}=\rrow_{n-2, n} - \rrow_{n-2,n-1}$
and $\ccol_{n-2, n}=\ccol_{n-2, n}-\ccol_{n-2,n-1} $ we get

$$
\Max_T \sim \left[\ba{c|c}
	\Max_{\widehat T} &\0\\[1.5ex] \hline 
\0 & \0
\ea \right].
$$
This completes the proof of case II.  Our proof is complete.
\end{proof}

\section{Smith normal form of $\Max_T$}
\label{sec:snf}

In this section, we determine the invariant factors of 
$\Max_T$.  Our main result is the following.

\begin{theorem}
\label{thm:snf-max-4pc}
Let $T$ be a tree on $n\geq 3$ vertices with $p$ leaves. Then, 
the invariant factors of $\Max_T$ are
$$
\overbrace{0,\cdots,0}^{\binom{n}{2}-2(n-p)}, 1, 1, 
\overbrace{2,\cdots,2}^{2(n-p-1)}.
$$
\end{theorem}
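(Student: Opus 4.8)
The plan is to prove the Smith Normal Form statement by tracking the invariant factors through the same inductive deletion-of-a-leaf reduction already used in the proof of Theorem~\ref{thm:rank}. Recall that the SNF is governed by integer row and column operations, and the rank reduction proof in Theorem~\ref{thm:rank} was carried out entirely via such unimodular operations (adding integer multiples of rows/columns), so the same operations are legal for computing the SNF. The base case $n=3$ is immediate: $\Max_{P_3}$ has SNF with invariant factors $1,2$ and a single zero, matching $\binom{3}{2}-2(3-2)=1$ zero, then $1,1$ is impossible here since $2(n-p-1)=0$—so I would first recompute the base case carefully. For $P_3$ we have $p=2$, $n=3$, so the claimed factors are one $0$, then $1,1$, and no $2$'s; but $\Max_{P_3}$ above has determinant-2 structure. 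This discrepancy means the base case must be checked with the correct parameters, and I expect the true smallest non-degenerate base cases (e.g.\ the star $K_{1,n-1}$ and the path $P_n$) may need separate verification.

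The main inductive step would mirror the two cases of Theorem~\ref{thm:rank}. In Case~I (a quasi-pendant vertex of degree two), the row/column operations reduce $\Max_T$ to a block-diagonal form whose nonzero blocks are $\Max_{\widehat T}$ together with a small explicit block $\bmat 2 & 2 \\ 2 & 2 \emat$-type residue coming from the $\{n-1,n\}$ and related rows. I would compute the SNF of that small residual block directly and invoke the fact that the SNF of a block-diagonal integer matrix is obtained by merging (and re-sorting) the invariant factors of the blocks. The key arithmetic point is that the residual block contributes exactly the new invariant factors needed: since deleting the leaf changes $n\mapsto n-1$ while $p$ changes according to the case, the net effect on the count $2(n-p)$ of nonzero invariant factors and on how many of them equal $2$ must match the claimed multiplicities. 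In Case~II (all quasi-pendants of degree $\geq 3$), the operations reduce $\Max_T$ directly to $\Max_{\widehat T}\oplus \mathbf{0}$, so here no new invariant factor $2$ is created and $p$ decreases by one, which again must be reconciled with the formula.

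The crucial bookkeeping is how $p$ (the number of leaves) changes under leaf deletion in each case, because the exponent $2(n-p-1)$ on the $2$'s and the exponent $\binom{n}{2}-2(n-p)$ on the $0$'s both depend on $p$. In Case~I, deleting the leaf $n$ may or may not create a new leaf (vertex $n-1$ becomes a leaf in $\widehat T$ since it had degree two), so $p_{\widehat T}=p_T$; the residual $2\times 2$-style block must then supply two extra $2$'s and consume the bookkeeping so that $2(n-p)=2((n-1)-p)+2$, which checks out. In Case~II, vertex $n-2$ retains degree $\geq 2$ and $n-1$ remains a leaf, so $p_{\widehat T}=p_T-1$, and indeed $\Max_T\sim\Max_{\widehat T}\oplus\mathbf{0}$ gives $2(n-p_T)=2((n-1)-(p_T-1))$, consistent with adding no new nonzero factor. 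I expect the main obstacle to be verifying that all row/column operations in Theorem~\ref{thm:rank} are genuinely \emph{unimodular over $\Z$} (they are integer combinations, so determinant $\pm1$, hence SNF-preserving) and that the small residual blocks have SNF exactly $\mathrm{diag}(1,2)$ or $\mathrm{diag}(2,2)$ as needed—that is, confirming no unexpected factor of $4$ or $1$ appears when the blocks merge. Once the per-case residual SNF is pinned down, the induction closes by the merge-and-sort rule for block-diagonal SNF, giving the stated invariant factors $0^{\binom{n}{2}-2(n-p)},\,1,1,\,2^{2(n-p-1)}$.
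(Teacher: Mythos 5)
Your strategy is exactly the paper's own: rerun the leaf-deletion induction from Theorem~\ref{thm:rank}, observe that all row/column operations there are integer elementary operations (hence unimodular and SNF-preserving), and track the invariant factors through the two cases; your bookkeeping for how $p$ changes (Case~I: $p_{\widehat T}=p$, so the residual block must supply two new factors of $2$; Case~II: $p_{\widehat T}=p-1$, so nothing new may be supplied) is also correct. But the proposal has genuine gaps at precisely the computations the proof lives or dies on. The first is the base case: there is no ``discrepancy,'' you simply miscomputed the SNF of $\Max_{P_3}$. The $2\times 2$ minor $\det\begin{bmatrix}2&3\\3&4\end{bmatrix}=-1$, so the first two determinantal divisors are $1$ and the third is $0$ (rank $2$); the invariant factors are $1,1,0$, exactly matching the theorem for $n=3$, $p=2$ (one zero, two ones, no twos). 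The paper settles this by an explicit unimodular factorization $\Max_{P_3}=U\,\mathrm{diag}(1,1,0)\,V$. An induction whose base case is declared a possible contradiction and left unresolved is not a proof.

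The second gap is the Case~I residual block. It is not of the form $\begin{bmatrix}2&2\\2&2\end{bmatrix}$: that matrix is singular with SNF $\mathrm{diag}(2,0)$, which would break both the rank count and the invariant-factor count. The correct residual, on the rows and columns indexed by $\{n-2,n\}$ and $\{n-1,n\}$ after the paper's operations, is $\begin{bmatrix}0&2\\2&2\end{bmatrix}$, with determinant $-4$ and SNF $\mathrm{diag}(2,2)$; this is what supplies the two new invariant factors equal to $2$, and pinning it down is not optional. The third gap is the ``merge-and-sort'' rule you invoke: the invariant factors of a block-diagonal integer matrix are \emph{not} in general the sorted union of the blocks' invariant factors (e.g.\ $\mathrm{diag}(2)\oplus\mathrm{diag}(3)$ has SNF $\mathrm{diag}(1,6)$, not $\mathrm{diag}(2,3)$). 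What merges is the multiset of prime-power elementary divisors. The rule you state happens to give the right answer here only because every nonunit nonzero invariant factor appearing anywhere in the induction equals the single prime $2$, so merging elementary divisors coincides with merging invariant factors and $1,\dots,1,2,\dots,2,0,\dots,0$ is a legitimate divisibility chain. That one-line justification must be supplied, since it is the step that closes the induction; it cannot be replaced by the false general principle as stated.
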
 
\begin{proof}
We prove the result by induction on the number of vertices in the tree $T$. 
Our base case is when $n=3$.  In this case, the only tree is the 
path $P_3$ on three vertices.  
Clearly, 
$$
\Max_{P_3} = \bmat 
	2 & 3 & 2 \\
	3 & 4 & 3 \\
	2 & 3 & 2
\emat =
\bmat 
1 & 0 & 0 \\
0 & 1 & 0 \\
1 & 0 & 1
\emat  
\bmat 
1 & 0 & 0 \\
0 & 1 & 0 \\
0 & 0 & 0
\emat
\bmat 
2 & 3 & 2 \\
3 & 4 & 3 \\
0 & 0 & 1
\emat.
$$
Therefore, the result follows when $n=3$. 

We assume that the result is true for all trees on $n-1$ vertices. 
Let $T$ be a tree on $n$ vertices where $n>3$.  Without loss of 
generality, let us assume that $n$ is a pendant vertex adjacent to $n-1$. 
Let $\widehat{T} = T - \{n\}$ be the tree obtained by deleting the
vertex $n$ from $T$. As done earlier, we divide the proof 
into two cases based on the degree of vertex $n-1$ in $T$. 

{\bf Case I:} If the degree of $n-1$ in $T$ is two, then, 
as done in Case I of the proof of Theorem \ref{thm:rank} 
we see that 
$$
\Max_T  \sim 
\left[ 
\ba{c|c|c} 
\Max_{\widehat T} & 
\0 &\0   \\\hline 
\0 & 	\0  & \0\\\hline 
\0 & \0 & 2 \\\hline 
\0 & \ba{cccc}\0 & 2\ea & 2
\ea\right] \sim \left[ 
\ba{c|c|c} 
\Max_{\widehat T} & 
\0 &\0   \\\hline 
\0 & 	\0  & \0\\\hline 
\0 & \ba{cccc}\0 & -2\ea & 0 \\\hline 
\0 & \0  & 2
\ea\right].
$$

The second similarity above is obvious and so our proof is
over in this case.

{\bf Case II:} If the degree of $n-1$ in $T$ is atleast three, then 
as done in Case II of the 
proof of Theorem \ref{thm:rank} 
we see that 
$$
\Max_T  \sim 
\left[\ba{c|c}
\Max_{\widehat T} &\0\\[1.5ex] \hline 
\0 & \0
\ea \right].
$$

Hence, in both cases, the result follows by applying the induction hypothesis. 
\end{proof}

\makeatletter
\def\bbordermatrix#1{\begingroup \m@th
	\@tempdima 4.75\p@
	\setbox\z@\vbox{%
		\def\cr{\crcr\noalign{\kern2\p@\global\let\cr\endline}}%
		\ialign{$##$\hfil\kern2\p@\kern\@tempdima&\thinspace\hfil$##$\hfil
			&&\quad\hfil$##$\hfil\crcr
			\omit\strut\hfil\crcr\noalign{\kern-\baselineskip}%
			#1\crcr\omit\strut\cr}}%
	\setbox\tw@\vbox{\unvcopy\z@\global\setbox\@ne\lastbox}%
	\setbox\tw@\hbox{\unhbox\@ne\unskip\global\setbox\@ne\lastbox}%
	\setbox\tw@\hbox{$\kern\wd\@ne\kern-\@tempdima\left[\kern-\wd\@ne
		\global\setbox\@ne\vbox{\box\@ne\kern2\p@}%
		\vcenter{\kern-\ht\@ne\unvbox\z@\kern-\baselineskip}\,\right]$}%
	\null\;\vbox{\kern\ht\@ne\box\tw@}\endgroup}
\makeatother

\section{Basis for the row space of $\Max_T$}
\label{sec:basis}  

In this section we define a set $\BB$ of bases of the row space 
of $\Max_T$.
We start with the following Corollary about the rank of $\Max_T$ when
we remove a type of leaf from $T$.  
 
\begin{corollary}
\label{cor:remove extra leaf}
Let $T$ be a tree on $n$ vertices with $n>3$. 
Suppose there exist two leaves $u$ and $v$ adjacent to the 
same vertex. Then we have 
 	$$
 	\rk(\Max_T) =\rk(\Max_{T-u})=\rk(\Max_{T-v}).
 	$$
\end{corollary}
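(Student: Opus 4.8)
The plan is to deduce this directly from the rank formula of Theorem \ref{thm:rank}, which asserts $\rk(\Max_T) = 2(n-p)$ where $p$ is the number of pendant vertices. Since deleting a leaf from a tree again yields a tree, now on $n-1$ vertices, the only quantity I need to track is how the number of leaves changes under this deletion; the whole argument then reduces to checking that $n-p$ is preserved.

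Let $w$ denote the common neighbour of the two leaves $u$ and $v$. First I would argue that the degree of $w$ is at least three. Indeed, if $w$ had degree two, its only neighbours would be the leaves $u$ and $v$, forcing $T$ to be the path on the three vertices $u,w,v$; this contradicts $n>3$. Hence $w$ is adjacent to $u$, to $v$, and to at least one further vertex.

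Next I would count the pendant vertices of $T-u$. Deleting the leaf $u$ alters the degree of no vertex other than $w$, and $w$ drops from degree at least three to degree at least two, so $w$ does not become a leaf and no new leaf is created. Therefore the pendant vertices of $T-u$ are exactly those of $T$ with $u$ removed, so $T-u$ has $p-1$ pendant vertices on $n-1$ vertices. Since $n>3$ gives $n-1\geq 3$, Theorem \ref{thm:rank} applies to $T-u$ and yields $\rk(\Max_{T-u}) = 2\big((n-1)-(p-1)\big) = 2(n-p) = \rk(\Max_T)$. The identical argument with the roles of $u$ and $v$ interchanged gives $\rk(\Max_{T-v}) = \rk(\Max_T)$, completing the proof.

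The only genuine obstacle is the bookkeeping of the leaf count: the observation that $\deg(w)\geq 3$ is precisely what guarantees that $p$ decreases by exactly one rather than staying fixed (which is what would happen if $w$ itself became a new leaf). Once that point is settled, everything else is a direct substitution into the rank formula, so I expect the write-up to be short.
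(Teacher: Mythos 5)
Your proof is correct and is essentially the paper's own argument: the paper's proof of this corollary is the single line ``Follows from Theorem \ref{thm:rank},'' and your write-up simply supplies the leaf-counting bookkeeping (including the key observation that the common neighbour has degree at least three, so $n-p$ is invariant under deleting $u$ or $v$) that this invocation implicitly relies on.
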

\begin{proof}
Follows from Theorem \ref{thm:rank}.
\end{proof}

\comment{
\begin{proof}
Let $\V_n$ be the collection of all $2$-size subsets of $[n]$ and $\U_{n-1} =\{\{i,n\}\mid i\in [n-1]\}$. Then we can order the elements of $\V_n$ as follows 
$\V_n = \{\V_{n-1}, \U_{n-1}\}$. Note that we may write 
$\Max_T$ in partitioned form as 
$$
\Max_T =\begin{bmatrix}
	\Max_{T-n} & \Max_{12}\\
	\Max_{12}^t & \Max_{22}
\end{bmatrix},
$$
where $\Max_{12}=\Max_T[\V_{n-1}, \U_{n-1}]$ and $\Max_{22} = 
\Max_T[\U_{n-1}, \U_{n-1}]$.

We use induction on $n$ and our base case is when $n=4$.
When $n=4$ then clearly $T$ is a star tree. Suppose $1,3,4$ are 
leaves in $T$. Then we have 
$$
\Max_T =\left[\begin{array}{ccc|ccc}
	2 & 3 & 2 & 3 & 2 & 3 \\
	3 & 4 & 3 & 4 & 3 & 4 \\
	2 & 3 & 2 & 3 & 2 & 3 \\\hline 
	3 & 4 & 3 & 4 & 3 & 4 \\
	2 & 3 & 2 & 3 & 2 & 3 \\
	3 & 4 & 3 & 4 & 3 & 4
\end{array}\right] 
\quad \text{and}\quad 
\Max_{T-4} =\left[\begin{array}{ccc}
2 & 3 & 2\\
3 & 4 & 3 \\
2 & 3 & 2 
\end{array}\right].
$$
It is easy to note that $\rk(\Max_T)= \rk(\Max_{T-4})=2$.

Suppose the result holds for each tree on $n-1$ vertices, $n>4$. 
Let $T$ be a tree on $n$ vertices. 
Let vertex $n$ and $n-1$ be leaves of $T$ and let both the leaves be 
adjacent to the same vertex $n-2$.
Since $T\setminus\{n,n-1\}$ is also a tree on at least two vertices, 
there exists another leaf $x$ in $T$ other than $n$ and $n-1$. 
Suppose $x$ is adjacent to the vertex $z$ and $y$ is 
another neighbor of $z$ (different from $x$).  
Then by Lemma \ref{lem: one-in-rowspace},  rows of $\Max_T$ 
satisfy 
\begin{equation*}
 \rrow_{x,y} = \rrow_{y,z} + \1, 
\hspace{3 mm}
\mbox{ and}
\hspace{3 mm}
 \rrow_{i,n} = \rrow_{i,n-2} + \1, \quad \text{for each } i\neq n,n-2. 
\end{equation*}

Further, by Lemma \ref{lem: ell}, we get 
$
\rrow_{n-2,n} = \rrow_{n-2,n-1}. 
$
Perform the row operations 
$\rrow_{i,n} = \rrow_{i,n}-\rrow_{i,n-2}-(\rrow_{x,y}-\rrow_{y,z})$ 
for each $i\neq n,n-2$ and then perform
$\rrow_{n-2,n} = \rrow_{n-2,n}-\rrow_{n-2,n-1}$.  We clearly get 
$$
\Max_T =\begin{bmatrix}
	\Max_{T-n} & \Max_{12}\\
	\0 & \0
\end{bmatrix}.
$$
Performing similar column operations, we get 
$$
\Max_T =\begin{bmatrix}
	\Max_{T-n} & \0\\
	\0 & \0
\end{bmatrix}.
$$
Therefore, we have $\rk(\Max_T) =\rk(\Max_{T-n})$.  By an identical 
argument, we get 
$\rk(\Max_T) = \rk(\Max_{T - (n-1)})$,
completing the proof. 
\end{proof}
} 
 
Let $T$ be a tree on $n$ vertices with $p$ leaves.
By Theorem \ref{thm:rank}, the rank of $\Max_T$ is $2(n-p)$.
To give a basis for the rowspace of $\Max_T$, we need
an index set with cardinality $2(n-p)$. 
We know that the number of blocks in $\LG(T)$, the line graph 
of $T$ is $n-p$. Thus, in order to construct a basis 
for $\rowsp(\Max_T)$ 
we shall take two elements from each block of $\LG(T)$ in the  
following algorithmic way.  Our algorithm is very similar 
to a depth first search (DFS) algorithm.   It turns out,
that our algorithm is easy for non-star graphs and so we 
first handle the case when $T$ is a star tree. 
 
\begin{lemma}\label{lem:star tree}
 Let $T$ be a star tree on $n$ vertices. Then,  the
rank of $\Max_T$ is two. Suppose $1$ is the central vertex of $T$, 
then, the rows indexed by $\{1,i\}$ and $\{j,k\}$ are linearly independent, 
where $1<i\leq j<k \leq n$. Further, let
$\BB$ be the collection $\{\{1,i\},\{j,k\}\}$ where $1<i\leq j<k \leq n$.
Let $B \in \BB$ be a basis.  Then, the determinant of the sub-matrix 
$\Max_T[B,B]$ of $\Max_T$ induced on the rows and columns 
in $B$ is given by 
 $$
 \det \Max_T[B,B] =-1.
 $$
\end{lemma}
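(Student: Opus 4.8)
The plan is to exploit the fact that a star tree has only two distance values: $d_{1,i}=1$ for every leaf $i$, and $d_{i,j}=2$ for every pair of distinct leaves $i,j$. Because of this, every entry of $\Max_T$ depends only on how many of its two indexing pairs contain the central vertex $1$. Concretely, I would assign to each pair $P$ a \emph{type} $\tau(P)\in\{0,1\}$, where $\tau(P)=0$ if $1\in P$ and $\tau(P)=1$ otherwise, and prove the formula
\[
\Max_T(P,Q) = 2 + \tau(P) + \tau(Q)
\]
for all pairs $P,Q$, including the cases where $P$ and $Q$ share a vertex or coincide.

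To establish this formula I would evaluate the three candidate sums $d_{w,x}+d_{y,z}$, $d_{w,y}+d_{x,z}$, $d_{w,z}+d_{x,y}$ directly, splitting into three cases: both $P,Q$ of type $0$ (all three sums equal $2$, so the entry is $2$); one of type $0$ and the other of type $1$ (the maximum sum is $3$); and both of type $1$ (the maximum sum is $4$). The only mildly delicate point is to confirm that the formula persists when $P$ and $Q$ overlap, for instance $P=\{1,i\}$ and $Q=\{i,k\}$, or $P=Q$; a direct check shows the value still equals $2+\tau(P)+\tau(Q)$ in these cases. I expect this case analysis --- keeping careful track of overlaps --- to be the main (and essentially only) obstacle, and it is routine.

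Given the formula, write $\boldsymbol{\tau}$ for the column vector of types and $\1$ for the all-ones vector. Then
\[
\Max_T = (2\,\1 + \boldsymbol{\tau})\,\1^t + \1\,\boldsymbol{\tau}^t,
\]
so the row space of $\Max_T$ lies in the span of $\1^t$ and $\boldsymbol{\tau}^t$, whence $\rk(\Max_T)\le 2$. Since $n\geq 3$ there exist pairs of both types, so $\1$ and $\boldsymbol{\tau}$ are linearly independent and the rank is exactly $2$, in agreement with Theorem \ref{thm:rank} applied with $p=n-1$.

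For the basis and determinant claims, observe that the row indexed by a type-$0$ pair equals $2\,\1^t+\boldsymbol{\tau}^t$, and the row indexed by a type-$1$ pair equals $3\,\1^t+\boldsymbol{\tau}^t$. For $B=\{\{1,i\},\{j,k\}\}$ with $1<i\leq j<k\leq n$, the first index has type $0$ and the second type $1$, so solving $a(2\,\1^t+\boldsymbol{\tau}^t)+b(3\,\1^t+\boldsymbol{\tau}^t)=\mathbf{0}$ forces $2a+3b=0$ and $a+b=0$, hence $a=b=0$; thus these two rows are independent and form a basis of the row space. Finally, specializing the entry formula gives
\[
\Max_T[B,B] = \begin{bmatrix} 2 & 3 \\ 3 & 4 \end{bmatrix},
\]
so $\det \Max_T[B,B] = 8-9 = -1$, as claimed.
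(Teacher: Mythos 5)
Your proof is correct and takes essentially the same approach as the paper: your formula $\Max_T(P,Q)=2+\tau(P)+\tau(Q)$ is exactly the paper's observation that, after partitioning pairs according to whether they contain the centre, $\Max_T=\begin{bmatrix}2J_1 & 3J_2\\ 3J_2^t & 4J_3\end{bmatrix}$ with all-ones blocks $J_i$. The only difference is that you spell out the routine deductions (the overlap cases, the rank-$2$ outer-product decomposition, and the $2\times 2$ determinant) that the paper leaves implicit after displaying the block form.
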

\begin{proof}
Let $T$ be a star tree on $n$ vertices and let $1$ be its central 
vertex (having degree $n-1$).   Thus $2,\ldots,n$ are leaves of $T$. 
Let $\V_n$ be the collection of all $2$-size subsets of $[n]$, 
$\V_{1} =\{\{1,i\}\mid 2\leq i\leq n\}$,  
$\V_{2} =\{\{j,k\}\mid 2\leq j<k\leq n\}$. Clearly $\V_n$ can be 
partitioned as  $\V_1\cup \V_2$.  Thus, we write  
$\Max_T$ in partitioned form as 
$$
	\Max_T =\begin{bmatrix}
		2J_1 &3J_2\\
		3J_2^t & 4J_3
	\end{bmatrix},
	$$
where each $J_i$ is an all ones matrix with appropriate size, $i=1,2,3$. 
This completes the proof.
\end{proof}

Note that if $T$ is a tree on three vertices then $T$ is a star tree.
Henceforth, we assume that $T$ is a tree on at least four vertices, 
and that $T$ is not a star tree. 


\begin{remark}\rm 
Let $T$ be a tree on $n\geq 3$ vertices and $\LG(T)$ be its 
line graph. Then, it is easy to see that the number of 
vertices in each block of $\LG(T)$ is at least two. 
\end{remark}

\begin{lemma}[Algorithm to construct a basis for row space of $\Max_T$] 
\label{rem:algo}\rm 
Let $T$ be a tree on $n> 3$ vertices and $\LG(T)$ be its line graph.
Initialise $G=\LG(T)$ and $B=\emptyset$.

Suppose $T$ is not a star tree. Consider a vertex $\{p,q\}$ in $G$ 
where $p$ is a leaf in $T$ and $q$ is adjacent to $p$.
Set the vertex $\{p,q\}$ of $\LG(T)$ as 
a {\em starting vertex} and set the {\em next starting vertex set} 
as the empty set.

\begin{itemize}
\item[Step 1.] Note that the {\em starting vertex} cannot be a cut 
vertex of $G$. Therefore, there exists a unique block $B_c$ in 
$G$ that contains the {\em starting vertex}. We call the 
block $B_c$ as the {\em current block}.

\item[Step 2.] If the current block $B_c$ contains a cut vertex of $G$. 
\begin{itemize}
		\item[a.] We choose a cut vertex $\{u,v\}$ in $B_c$ and call it 
the {\em chosen vertex}.  Further, add all other cut vertices of $G$ 
that are in $B_c$ (that is, other than the cut vertex $\{u,v\}$) 
into the {\em next starting vertex set}. 
		 
		\item[b.] Let $\widehat G$ be the graph obtained from $G$ by 
removing all edges of $B_c$ and then deleting all the non-cut 
vertices of $B_c$ 
from $G$. 
(Define $G = G - \{ \mbox{edges in $B_c$ } \}$ 
and then define $\widehat G = G - (B_c - 
\{\mbox{non cut vertices in $B_c$}\})$. 
Thus,
$\widehat G$ has one block lesser than  $G$.)  
Note that all the cut vertices of $G$ which are in $B_c$ 
become non-cut vertices in $\widehat G$.  Set $G=\widehat G$.  
		
\item[c.] To our set $B$, we add two elements; the {\em starting vertex} 
and the symmetric difference between the starting vertex and the 
chosen vertex $\{u,v\}$. 
		
		\item[d.]  Redefine the {\em starting vertex} as the 
{\em chosen vertex} $\{u,v\}$ and go to Step 1.  
	\end{itemize}

\item[Step 3.] If the current block $B_c$ does not contain any 
cut vertex of $G$. 
\begin{itemize}
	\item[a.] Choose a vertex $\{u,v\}$ in $B_c$ other than the 
starting vertex and call it the {\em chosen vertex}. 
		
		\item[b.] Add the two elements {\em starting vertex} 
and the {\em chosen vertex $\{u,v\}$} to $B$.
		
		\item[c.] 
Define $\widehat G = G - B_c$.
Set $G=\widehat G$.  
		
\item[d.] If {\em next starting vertex set} is the empty set, 
output $B$ and terminate 
the algorithm. Otherwise, choose an element, say $\{w,x\}$ from the 
{\em next starting vertex set}, and delete it from  
{\em next starting vertex set}.  Now redefine the {\em starting vertex} 
as $\{w,x\}$ and go to Step 1.
\end{itemize} 
\end{itemize}
\end{lemma}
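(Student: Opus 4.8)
The plan is to verify three things: that the algorithm is well-defined (every choice it makes is available), that it terminates having processed every block of $\LG(T)$ exactly once, and that the resulting set $B$ of size $2(n-p)$ indexes a linearly independent family of rows, whence $B$ is a basis of $\rowsp(\Max_T)$ by Theorem \ref{thm:rank}.

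First I would record the block structure of $\LG(T)$. Each internal vertex $r$ of $T$ of degree $d\geq 2$ contributes a clique $K_d$ on the $d$ edges incident to $r$, and these cliques are precisely the blocks of $\LG(T)$; hence $\LG(T)$ has exactly $n-p$ blocks, each (by the Remark) on at least two vertices. Moreover the cut vertices of $\LG(T)$ are exactly the edges of $T$ joining two internal vertices. With this in hand, the algorithm is seen to be a depth-first traversal of the block--cut tree of $\LG(T)$. The key invariant is that the current \emph{starting vertex} is always a non-cut vertex of the current graph $G$: this holds initially since $\{p,q\}$ with $p$ a leaf is simplicial in its clique, and it is preserved because each \emph{chosen} cut vertex, and each vertex later popped from the \emph{next starting vertex set}, becomes non-cut once the edges of its block $B_c$ are deleted in Step 2b. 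Thus Step 1 always finds a unique current block, Step 2 applies exactly when $B_c$ still carries a cut vertex, Step 3 applies otherwise, and $|B_c|\geq 2$ guarantees in Step 3a a vertex other than the starting vertex.

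Next I would argue termination and the count. Each pass through Step 2 or Step 3 deletes the edges of exactly one block and decreases the block count of $G$ by one, while the \emph{next starting vertex set} exactly records the as-yet-unvisited branches at each articulation; so the traversal visits all $n-p$ blocks and then halts in Step 3d. Since each block contributes precisely two elements to $B$ (the starting vertex together with, in Step 2, its symmetric difference with the chosen cut vertex, or, in Step 3, the chosen vertex), we obtain $|B|=2(n-p)$, matching $\rk(\Max_T)$ by Theorem \ref{thm:rank}.

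It then remains to show these $2(n-p)$ rows are linearly independent, which is the main obstacle. The cleanest route is to invoke Theorem \ref{thm:det_value}, which asserts $\det \Max_T[B,B]=(-1)^{n-p}2^{2(n-p-1)}\neq 0$; since that determinant is computed directly (without presupposing that $B$ is a basis), there is no circularity, and the nonsingularity of the square submatrix $\Max_T[B,B]$ forces the rows indexed by $B$ to be independent. Combined with $|B|=2(n-p)=\rk(\Max_T)$, this makes $B$ a basis of the row space. Alternatively, and more self-containedly, one can prove independence by induction on $n$ mirroring the proof of Theorem \ref{thm:rank}: a leaf block of the block--cut tree corresponds to a quasi-pendant vertex, and the two elements the algorithm selects from it are exactly the two rows (and columns) that the reductions of Lemmas \ref{lem: one-in-rowspace} and \ref{lem: ell} peel off to leave $\Max_{\widehat T}$ block-diagonally decoupled; tracking that these two survive as independent new directions while the remaining $2(n-p)-2$ rows restrict to a basis of $\rowsp(\Max_{\widehat T})$ by the inductive hypothesis completes the argument. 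The delicate point in either approach is matching the algorithm's specific choices to the specific row and column operations, so that the decoupled data is genuinely nonsingular.
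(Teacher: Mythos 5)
Your verification of well-definedness and termination is correct and is in fact a useful addition: you identify the blocks of $\LG(T)$ as the cliques at the $n-p$ internal vertices of $T$, the cut vertices of $\LG(T)$ as the edges of $T$ joining two internal vertices, and the invariant that the current \emph{starting vertex} is never a cut vertex of the current $G$ — all of which the paper asserts inside the algorithm text without proof, since it attaches no proof to Lemma \ref{rem:algo} at all and defers every substantive property to Theorem \ref{th: det}. Your block-count argument for $|B| = 2(n-p)$ matches Item b of that theorem, and your ``alternative'' route for independence — induction on $n$, peeling a leaf $x$ on a diametral path and using Lemmas \ref{lem: one-in-rowspace} and \ref{lem: ell} (plus Corollary \ref{cor:remove extra leaf} when two leaves share a neighbour) to decouple the two rows the algorithm selects from the corresponding block, yielding $\det \Max_T[B,B] = -4\,\det \Max_{T-x}[\widehat B,\widehat B]$ — is precisely the paper's proof of Item c of Theorem \ref{th: det}.

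The problem is the route you call ``cleanest.'' Invoking Theorem \ref{thm:det_value} as a black box is circular as the paper is structured, and your parenthetical defense does not hold. First, the statement of Theorem \ref{thm:det_value} itself presupposes that $B$ is a basis (``Let $B$ be a basis for the row space of $\Max_T$ that is output by the algorithm\ldots''). Second, the paper's only proof of that determinant formula is Item d of Theorem \ref{th: det}, which is extracted from the very induction in Item c that establishes the basis property; nowhere is the determinant ``computed directly'' in advance of, or independently of, that induction. What is true is that the recursion $\det = -4\det'$ with base case $-1$ for the star tree can be run as a pure determinant computation, never using basis-ness, and non-vanishing then forces independence of the rows indexed by $B$; combined with $|B| = 2(n-p) = \rk(\Max_T)$ this gives the basis property. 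But carrying that out is exactly your route (b). So route (a) is either circular (if the theorem is cited as stated) or collapses into route (b) (if you re-derive the determinant from scratch); it buys nothing. With route (b) written out, your proposal is correct and coincides with the paper's argument.
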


In the following example we illustrate the algorithm described in Lemma
 \ref{rem:algo}. 

\begin{example}\rm \label{ex: basis}
Consider the tree $T$ shown below. Its line graph $\LG(T)$ 
is shown on the right. 
\begin{center}
\begin{minipage}{0.45\textwidth}
	\includegraphics[scale=1.25]{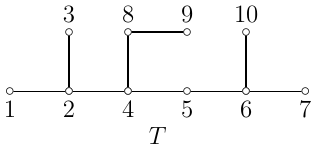}
\end{minipage}
\begin{minipage}{0.45\textwidth}
\includegraphics{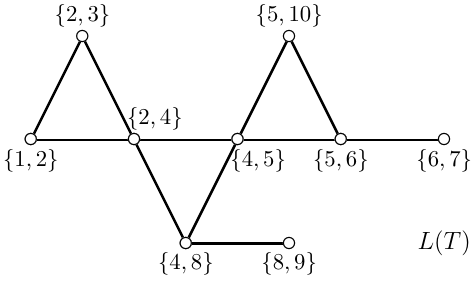}
\end{minipage} 
\end{center} 

Suppose we start the algorithm by choosing the leaf $1$ in $T$. 
Therefore, $\{1,2\}$ is our {\em starting vertex}.  We use black 
colored, gray colored, and red colored nodes to represent the 
\textit{starting vertex}, the \textit{chosen vertex}, and 
the \textit{next starting vertex} respectively.  

\begin{center}
	\begin{minipage}{0.47\textwidth}
		\begin{flushleft}
			\includegraphics[scale=1]{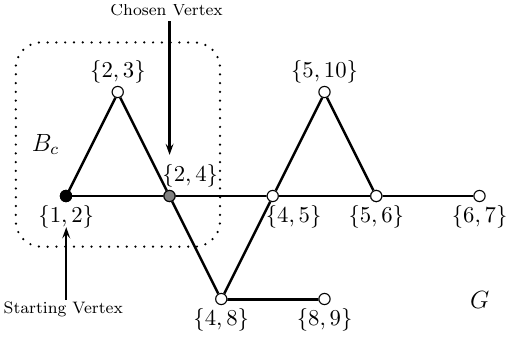}
		\end{flushleft}
	\end{minipage}
	\begin{minipage}{0.47\textwidth}
		\begin{flushright}
			\includegraphics[scale=1]{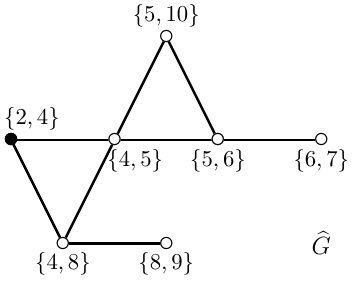}
		\end{flushright}
	\end{minipage}
\end{center} 

Recall that initially $G=\LG(T)$.  The block containing $\{1,2\}$ 
is the current block $B_c$ and is marked using dotted lines.
Clearly, $B_c$ contains only one other cut vertex of $G$ (vertex 
$\{2,4\}$) and so the {\em chosen vertex} is $\{2,4\}$.  
As there is only one cut vertex of $G$ in $B_c$, by Step 2a, the 
{\em next starting vertex set} is the empty set
(see the graph drawn on the left in the above diagram).  By Step 2b, 
construct $\widehat G$ from $G$ by deleting all edges of $B_c$ along 
with vertices $\{1,2\}$ and $\{2,3\}$. (See the graph drawn on the 
right in the above diagram.) By Step 2c, add
$\{1,2\}$ and $\{1,4\}$ (the symmetric difference of $\{1,2\}$ and 
$\{2,4\}$) to $B$. By Step 2d, we make $\{2,4\}$ as the 
current {\em starting vertex} and proceed to Step 1.

\begin{center}
	\begin{minipage}{0.47\textwidth}
		\begin{flushleft}
			\includegraphics[scale=1]{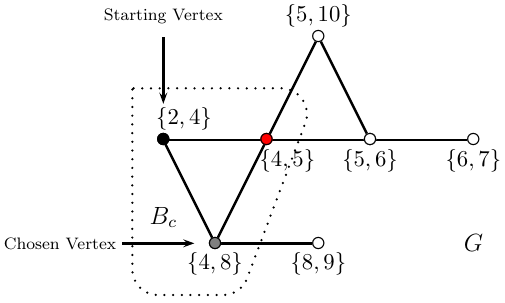}
		\end{flushleft}
	\end{minipage}
	\begin{minipage}{0.47\textwidth}
		\begin{flushright}
			\includegraphics[scale=1]{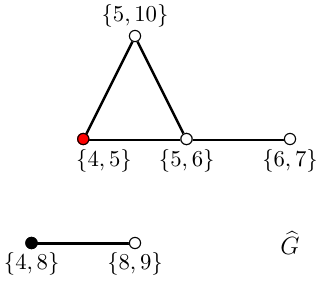}
		\end{flushright}
	\end{minipage}
\end{center} 

As the starting vertex is $\{2,4\}$, the block that contains it is
$B_c$.
Note that $B_c$ contains two cut vertices of $G$: viz 
$\{4,5\}$ and $\{4,8\}$. By Step 2a, we choose $\{4,8\}$ as our 
{\em chosen vertex} and so the {\em next starting set} is $\{\{4,5\}\}$. 
(See the left graph in the above diagram.) Construct $\widehat G$ from $G$ 
by performing Step 2b.  $\widehat G$ is shown in the graph on the right, in
the above diagram. By Step 2c, after adding  $\{2,4\}$ and $\{2,8\}$ 
(the symmetric difference of $\{2,4\}$ and $\{4,8\}$) to  $B$, 
the set $B$ becomes $B= \{ \{1,2\}, \{1,4\}, \{2,4\}, \{2,8\}\}$. 
By Step 2d, we make $\{4,8\}$ as the current {\em starting vertex} 
and proceed to Step 1 again.

As the starting vertex is $\{4,8\}$, the current block $B_c$ is 
the one containing it and is drawn with dotted lines. 
Note that $B_c$ does not contain any cut vertex of $G$. By Step 3a, 
we choose $\{8,9\}$ as our {\em chosen vertex}. (See the left 
graph in the below diagram.) By applying Step 3b, the set $B$ now 
becomes 
$B= \{ \{1,2\}, \{1,4\}, \{2,4\}, \{2,8\}, \{4,8\}, \{8,9\}\}$.  Note 
that no symmetric difference is performed to the newly added
elements of $B$ at this stage.
Construct $\widehat G$ from $G$ by following Step 3c, 
which is shown in the right graph of the below diagram. Note 
that $\{4,5\}$ is the only element on {\em next starting vertex set}. 
Thus, $\{4,5\}$ is our {\em starting vertex} and we proceed to Step 1.

\begin{center}
	\begin{minipage}{0.47\textwidth}
		\begin{flushleft}
			\includegraphics[scale=1]{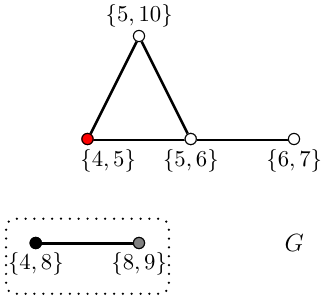}
		\end{flushleft}
	\end{minipage}
	\begin{minipage}{0.47\textwidth}
		\begin{flushright}
			\includegraphics[scale=1]{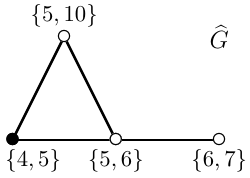}
		\end{flushright}
	\end{minipage}
\end{center} 

Since  $\{4,5\}$ is our starting vertex, by Step 1, 
the current block $B_c$ is $\{\{4,5\},\{5,6\},\{5,10\}\}$. Note that $B_c$ 
contains only one cut vertex of $G$ which is our chosen vertex and  is
marked with gray colored node in the below figure. By Step 2b, we construct the graph $\widehat G$, see the right side graph in the below figure. After applying Step 2c, the set $B$ becomes 
$$
B= \{ \{1,2\}, \{1,4\}, \{2,4\}, \{2,8\}, \{4,8\}, \{8,9\}, \{4,5\},\{4,6\}\}.
$$ 
Now proceed to Step 1 again with $\{5,6\}$ as our {\em starting vertex}. 
\begin{center}
	\begin{minipage}{0.47\textwidth}
		\begin{flushleft}
			\includegraphics[scale=1]{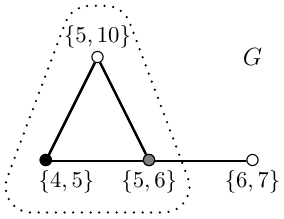}
		\end{flushleft}
	\end{minipage}
	\begin{minipage}{0.47\textwidth}
		\begin{flushright}
			\includegraphics[scale=1]{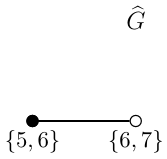}
		\end{flushright}
	\end{minipage}
\end{center} 

Since  $\{5,6\}$ is starting vertex, by Step 1, the current block 
$B_c$ is $\{\{5,6\},\{6,7\}\}$. Note that $B_c$ does not contain 
any cut vertex of $G$.  After applying Steps 3a-b, the set $B$ becomes 
$$
B= \{ \{1,2\}, \{1,4\}, \{2,4\}, \{2,8\}, \{4,8\}, \{8,9\}, \{4,5\},\{4,6\}, \{5,6\},\{6,7\}\}.
$$ 
Note that by applying Step 3c, we will get an empty graph as 
$\widehat G$. Since there is no  element in {\em  next starting 
vertex set}, by Step 3d, we terminate the process. Note that the 
final set $B$ contains $10$ elements and each block of $\LG(T)$ 
contributes exactly two elements to $B$. We mark 
those elements of $\LG(T)$ using red color in Figure \ref{fig: ex-basis}.
Note that red coloured edges of $\LG(T)$ mean we take the
symmetric difference of the end points of this edge to get a 
2-sized subset of $V(T)$.

\begin{figure}[t]\centering
	\includegraphics{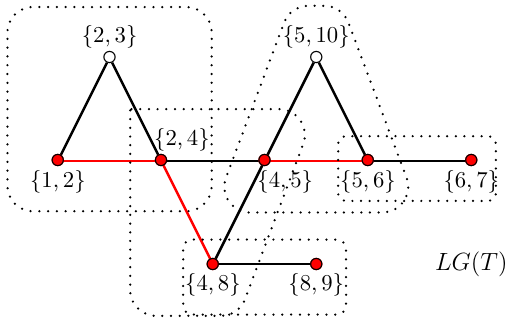}
	\caption{Red colored elements of $\LG(T)$ indicate elements 
contributed to $B$ in Example \ref{ex: basis}.}\label{fig: ex-basis}
\end{figure} 
\end{example}

Define $\BB$ to be the union of all the sets $B$ obtained by 
the algorithm described in Lemma \ref{rem:algo} where the 
union is taken over all possible choices of starting vertices.
In our next result, we discuss some properties of the output 
$B$ obtained by applying the algorithm.

\begin{theorem}\label{th: det}
Let $T$ be a tree  on $n\geq  3$ vertices with $p$ leaves. 
Let $B \in \BB$ be an output of our algorithm described in 
Lemma \ref{rem:algo}. Then, the following is true. 

\begin{itemize}
\item[a.] If $T$ is not a star tree, then there exist unique 
vertices  $u,v,w\in T$ such that $\{u,v\}, \{u,w\} \in B$ 
with $d(u)=1$, $d(w)> 1$ (recall $d(u)$ is the degree of vertex 
$u$) and with both $\{u,v\}, \{v,w\} \in E(T)$.
	
	\item[b.] The number of elements in $B$ is $2(n-p)$.
	
	\item[c.] The set $B$ is a basis for the row space of $\Max_T$.
	
	\item[d.] 
We have $\displaystyle \det 
\Max_T[B,B] =(-1)^{n-p}\ 2^{ 2(n-p-1)}.$
\end{itemize}	
\end{theorem}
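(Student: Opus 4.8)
The plan is to prove (a)--(c) directly from the mechanics of the algorithm and then to establish (d) by induction on $n$, with Lemma \ref{lem:star tree} supplying the base case.

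For part (a) I would track which elements of $B$ can share a common leaf of $T$. Since $T$ is not a star, the block $B_c$ of $\LG(T)$ around the neighbour $v$ of the starting leaf $u$ must contain a cut vertex: a block around an internal vertex $v$ has no cut vertex exactly when every neighbour of $v$ is a leaf, i.e. exactly when $T$ is a star. Hence the first block is processed by Step 2, contributing the starting vertex $\{u,v\}$ and the symmetric difference $\{u,v\}\bigtriangleup\{v,w\}=\{u,w\}$, where $\{v,w\}$ is the chosen cut vertex. As a cut vertex of $\LG(T)$ is an edge of $T$ with both ends internal, this gives $d(u)=1$, $d(w)>1$ and the path $u$-$v$-$w$, hence existence. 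For uniqueness, observe that after the first step every starting vertex is a cut vertex (both endpoints internal); consequently Step 2c always adds two elements $\{a,b\},\{a,c\}$ with $a,b,c$ internal, while Step 3b adds a cut vertex $\{a,b\}$ together with a single leaf edge $\{b,x\}$ whose partner $\{a,b\}$ avoids $x$. Thus no later step produces two elements sharing a common leaf, so the triple $u,v,w$ is unique.

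For part (b), the blocks of $\LG(T)$ are in bijection with the internal vertices of $T$ (the block at internal $x$ being the clique $K_{d(x)}$), so there are $n-p$ of them; the algorithm visits each exactly once and adds exactly two elements per block, and the depth-first bookkeeping through the next-starting-vertex set makes the contributed pairs pairwise distinct, whence $|B|=2(n-p)$. Part (c) is then immediate from (b), (d) and Theorem \ref{thm:rank}: since $\rk(\Max_T)=2(n-p)=|B|$ and $\det\Max_T[B,B]\neq 0$, the rows indexed by $B$ are linearly independent and therefore form a basis of the row space. The substance is part (d), which I would prove by induction on $n$. The base case is the star, where $n-p=1$ and Lemma \ref{lem:star tree} gives $\det\Max_T[B,B]=-1=(-1)^{1}2^{0}$. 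For the inductive step I would organise the reduction around a diametral path $v_1$-$v_2$-$\cdots$; its quasi-pendant $v_2$ has all off-path neighbours leaves, so either (i) $d(v_2)=2$, or (ii) $v_2$ carries at least two leaves. In situation (ii) the two basis elements from the block at $v_2$ use at most one of its leaves, so some leaf at $v_2$ is unused by $B$; deleting it leaves $n-p$ unchanged, keeps $B$ as an algorithm output on the smaller tree, and does not alter $\Max_T[B,B]$, so the induction hypothesis applies verbatim. In situation (i), the degree-two quasi-pendant $q=v_2$, its leaf $\ell=v_1$ and its other neighbour $m$ form a $K_2$ block; deleting $\ell$ turns $q$ into a leaf and removes one block, so $n-p$ drops by one. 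Here I would peel off the two basis elements of this block: using Lemma \ref{lem: one-in-rowspace} in the anchored form $\rrow_{\ell,m}=\rrow_{q,m}+\1^{t}$ (valid because $\{q,m\}$ lies in the induced basis $B'$ of the smaller tree) together with the companion identity $\rrow_{\ell,q}-\rrow_{\ell,m}=\1^{t}$ on the columns indexed by $B'$, one decouples the pair $\{\ell,q\},\{\ell,m\}$ into the $2\times2$ block $\left[\begin{smallmatrix}0&2\\2&2\end{smallmatrix}\right]$ of determinant $-4$, leaving $\Max_{T-\ell}[B',B']$ in the complementary block. This yields $\det\Max_T[B,B]=-4\,\det\Max_{T-\ell}[B',B']$, and since $-4\cdot(-1)^{n-p-1}2^{2(n-p-2)}=(-1)^{n-p}2^{2(n-p-1)}$, the induction closes.

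The main obstacle, and the step needing the most care, is that $B$ is an \emph{arbitrary} output of the algorithm, so in situation (i) its execution need not start at the leaf $\ell$ I wish to delete; the block at $q$ may then contribute $\{m,q\}$ and $\{q,\ell\}$ rather than $\{\ell,q\}$ and $\{\ell,m\}$. Reconciling this requires either (1) a basis-exchange invariance, namely that $\det\Max_T[B,B]$ is independent of the algorithm's choices, letting me replace $B$ by an equivalent output that does start at $\ell$, or (2) a direct argument through Lemma \ref{lem: ell}, relating the leaf-edge row $\{q,\ell\}$ to the row $\{m,q\}$ inside the block at $q$, that the peeling still yields the $2\times2$ determinant $-4$. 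A second delicate point is that every reducing row and column operation must remain inside the submatrix indexed by $B$: the full-matrix reductions in the proofs of Theorems \ref{thm:rank} and \ref{thm:snf-max-4pc} invoke rows outside $B$, so I would instead route each operation through Lemma \ref{lem: one-in-rowspace} anchored at the basis edge $\{q,m\}$, clearing the resulting all-ones vector using the fact that $\1$ lies in the row space of $\Max_{T-\ell}$ (again by Lemma \ref{lem: one-in-rowspace}, since $T-\ell$ has a pendant vertex). Verifying that this clearing leaves precisely the block of determinant $-4$ together with an untouched copy of $\Max_{T-\ell}[B',B']$ is the one computation I would carry out in full.
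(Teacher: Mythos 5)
Your proposal follows essentially the same route as the paper: parts (a) and (b) are argued from the algorithm's mechanics exactly as the paper does, and parts (c)/(d) are established together by induction on $n$ by deleting a leaf $x$ on a diametral path, splitting into the case of several leaves at the quasi-pendant (handled via Corollary \ref{cor:remove extra leaf}, with $n-p$ and $\Max_T[B,B]$ unchanged) and the degree-two quasi-pendant case, where two basis elements are peeled off with determinant factor $-4$, the star tree and Lemma \ref{lem:star tree} supplying the base case $-1$. The main obstacle you flag --- that $B$ need not come from a run starting at the leaf being deleted --- is resolved in the paper precisely by your option (2): a two-case analysis according to whether $\{y,z\}$ lies in the induced basis $\widehat B$, using Lemma \ref{lem: one-in-rowspace} in one case and Lemma \ref{lem: ell} in the other, and keeping all operations inside the submatrix by realizing the all-ones vector as $\rrow_{1,3}-\rrow_{2,3}$ with both index pairs guaranteed to lie in $B$ by part (a).
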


\begin{proof}{\em Proof of Item a.}  In the algorithm, the initial 
starting vertex $\{u,v\}$ is clearly taken with $u$ being a 
leaf adjacent to $v$. Since $T$ is not a star, the number of blocks 
in  $\LG(T)$ is at least two.  Thus, the block of $\LG(T)$ that 
contains the vertex $\{u,v\}$ must contain a cut vertex of 
$G$. By Step 2 of Lemma \ref{rem:algo}, it follows that 
there exists a cut vertex $\{v,w\}\in \LG(T)$ that give rise to 
$\{u,v\}, \{u,w\}\in B$.

We now show the uniqueness of $u$.  Suppose, to the contrary,
there are  $u_1,v_1,w_1\in T$ such that 
$\{u_1,v_1\}, \{u_1,w_1\}\in B$ with $u \neq u_1$, 
$\{u_1, v_1\}$ and $\{v_1, w_1\} \in E(T)$, with  
$d(u_1)=1$, and $d(w_1)>1$. 
Clearly, both $\{u_1,v_1\}$, $\{u_1,w_1\}$ were added 
to $B$ in Step 2c. Since $\{u_1,w_1\}$ is not an edge in $T$, it 
follows that in some step  $\{u_1,v_1\}$ was a starting vertex. 
As  $u\neq u_1$, it follows that degree of the vertex $u_1$ in 
$T$ is at least two. This contradicts that $u_1$ is a leaf.

{\em Proof of Item b.} If $T$ is a star tree, then, 
there is nothing to prove. Suppose  $T$ is not a star tree. 
By Lemma \ref{rem:algo}, note that in each step, exactly one block 
of the line graph of $T$ is removed and exactly two elements 
corresponding to that block are added in $B$. Since the 
number of block in $\LG(T)$ is $(n-p)$, 
$B$ has $2(n-p)$ elements.

{\em Proof of Item c.} We use induction on $n$. Our base case 
when $T$ has three vertices can easily be verified.
Suppose the result is true for all tree on $n-1$ vertices. Let 
$T$ be a tree on $n$ vertices.  If $T$ is a star tree then 
the result follows by Lemma \ref{lem:star tree}.

Suppose $T$ is not a star tree. Let $B$ be a set output
by our algorithm described in Lemma \ref{rem:algo}. 
By part (a), there exist unique vertices $u,v,w$ in $B$ 
such that $\{u,v\},\{u,w\}\in T$ with $d(u)=1$ and 
$\{u,v\} , \{v,w\} \in E(T)$. Without of loss of 
generality let us assume $u=1, v=2$, and $w=3$. 
Note that, by Lemma \ref{lem: one-in-rowspace}, we get
\begin{equation}\label{eq:allone}
\Max_{T}[B,\{1,3\}]=\Max_{T}[B,\{2,3\}]+\1.  
\end{equation}

Now note that for each leaf $l\neq 1$ in $T$, if $\{l,v\}\in B$ 
for some $v$ then $\{v,l\} \in E(T)$ and $\{v,w\}\in B$, where $w$ is a 
neighbor of $v$ other than $l$. 
Without loss of any generality, let us assume $x$ is a leaf lying
on a path whose length is the diameter of $T$ with $\{x, y\} \in E(T)$.
Note that if there is more than one leaf attached at $y$, then 
by the induction hypothesis and Corollary \ref{cor:remove extra leaf}, 
it follows that $B$ is a basis for the row space of $\Max_T$. 

Let us assume $d(y)=2$ and $\{y,z\} \in E(T)$.  It follows that 
$\{x,y\}, \{y,z\} \in B$.
Suppose $w$ be the neighbor of $z$ other than $y$ such that $\{z,w\}\in B$. 
Let $\widehat{B}$ be the set obtained by applying 
Lemma \ref{rem:algo} on $T-x$ in the same sequence as it was 
applied for $T$ while obtaining the set $B$. By induction hypothesis, 
$\widehat{B}$ is a basis for the row space of $\Max_{T-x}$. 
Now we divide the remaining part of the proof into two cases.

We first assume that $\{y,z\}\in\widehat{B}$. It follows that 
$\{y,w\}\in B$.  Then, $\widehat B=B\setminus\{\{x,y\},\{y,w\}\}$.  
Note that the matrix $\Max_{T}[B,B]$ can be partitioned as 
$$
\Max_{T}[B,B]= 
\bbordermatrix{
&& \{x,y\} & \{y,w\}\cr
& \Max_{T-x}[\widehat B,\widehat B] & \u & \v \cr 
\{x,y\}& \u^t & 2 & 3\cr
\{y,w\}& \v^t & 3 & 4
},
$$
where  $\u=\Max_{T}[\widehat B,\{x,y\}]$ and 
$\v=\Max_{T-x}[\widehat B,\{y,w\}]$.

By Lemma \ref{lem: one-in-rowspace}, we have
$$\Max_{T-x}[\widehat B,\{y,w\}]=\Max_{T-x}[\widehat B,\{z,w\}]+\1.$$  
Further, note that $\Max_{T}(\{z,w\},\{x,y\}) =4$ and  
$\Max_{T}(\{z,w\},\{y,w\}) =3$. Hence, by performing the
row operation $\rrow_{y,w} = \rrow_{y,w}-
\rrow_{z,w} - (\rrow_{1,3}-\rrow_{2,3})$ 
in $\Max_T[B,B]$ and an identical column operation, we obtain 
$$
\bbordermatrix{
	&& \{x,y\} & \{y,w\}\cr
	& \Max_{T-x}[\widehat B,\widehat B] & \u & \0 \cr 
	\{x,y\}& \u^t & 2 & -2\cr
	\{y,w\}& \0^t & -2 & 0
}.
$$

It follows that $\det \Max_{T}[B,B] = (-4)\times
\det \Max_{T-x}[\widehat B,\widehat B]$. Hence, by induction 
hypothesis, it follows that $B$ is a basis for the row space 
of $\Max_T$.

Now we consider the case when $\{y,z\}\notin\widehat B$.  
It follows that $\widehat B= B\setminus\{\{y,z\},\{x,y\}\}$. We can 
clearly partition the matrix $\Max_{T}[B,B]$ as follows.
$$
\Max_{T}[B,B]= 
\bbordermatrix{
	&& \{y,z\} & \{x,y\}\cr
	& \Max_{T-x}[\widehat B,\widehat B] &\w& \u \cr 
	\{y,z\}& \w^t & 2 & 2\cr
	\{x,y\}& \u^t & 2 & 2
},
$$
where  $\u=\Max_{T-x}[\widehat B,\{x,y\}]$ and  
$\w=\Max_{T}[\widehat B,\{y,z\}]$.

By Lemma  \ref{lem: ell}, it follows that $\u=\w+2\1$. 
Hence, by performing the row operation 
$\rrow_{x,y} = \rrow_{x,y}-\rrow_{y,z} - (\rrow_{1,3}-\rrow_{2,3})$ 
in $\Max_{T}[B,B]$ and an identical column operation, we get
$$
\bbordermatrix{
	&& \{y,z\} & \{x,y\}\cr
	& \Max_{T-x}[\widehat B,\widehat B] & \u & \0 \cr 
	\{y,z\}& \u^t & 2 & -2\cr
	\{x,y\}& \0^t & -2 & 0
}
$$
It follows that $\det \Max_{T}[B,B] = (-4) \times 
\det \Max_{T-x}[\widehat B,\widehat B]$. Hence, by the 
induction hypothesis, it follows that $B$ is a basis 
for the row space of $ \Max_{T}$, completing the proof.

{\em Proof of Item d.} 
The result follows from the proof of Item c and noting 
that when $n=3$, the determinant value is $(-1)$. 
\end{proof}

%
%
%
%
%

\section{Inertia of $\Max_T$}
\label{sec:inertia}
In this section, we determine the inertia of $\Max_T$. 
For an $n\times n$ real symmetric matrix $A$, we denote its 
number of positive, negative and zero eigenvalues by $n_+$, 
$n_-$ and $n_0$, respectively.  We denote the inertia of 
$A$ by $\text{Inertia}(A)$ and define it as the 
triple $(n_0, n_+, n_-)$. Since $A$ is a real symmetric matrix, 
$n_0+ n_+ + n_-=n$.  We recall the well known Sylvester's law of inertia. 

\begin{theorem}[Sylvester's Law of Inertia]
\label{th syl-iner}
Let $A$ be a real symmetric matrix of order $n$ and let $Q$ be a
nonsingular matrix of order $n$. Then $\textrm{Inertia}(A) = \textrm{Inertia}(QAQ^t)$.
\end{theorem}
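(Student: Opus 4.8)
The statement is the classical Sylvester Law of Inertia, so the plan is to give the standard congruence-invariance argument rather than anything special to $\Max_T$. First I would reduce the claim to a single statement about the positive index. Writing $B := QAQ^t$, since $Q$ is nonsingular congruence preserves rank, so $n_0(B) = n - \rk(B) = n - \rk(A) = n_0(A)$, and the nullity is handled at once. Thus it is enough to prove $n_+(A) = n_+(B)$; the equality $n_-(A) = n_-(B)$ then follows either from $n_0 + n_+ + n_- = n$ or by applying the positive-index case to $-A$, whose congruent image is $Q(-A)Q^t = -B$.

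The key tool I would set up is the variational characterization
\[
n_+(A) = \max\{\dim V : V \subseteq \RR^n \text{ a subspace with } x^t A x > 0 \text{ for all } x \in V \setminus \{\0\}\}.
\]
To prove it I would diagonalize $A = P\Lambda P^t$ with $P$ orthogonal and $\Lambda$ diagonal. The span $V_+$ of the eigenvectors with positive eigenvalue has dimension $n_+(A)$ and makes the form strictly positive, giving the bound $\geq$. For the reverse bound, let $V_{\leq 0}$ be the span of the eigenvectors with nonpositive eigenvalue, so $\dim V_{\leq 0} = n - n_+(A)$ and $x^t A x \leq 0$ there; if some subspace $V$ had $\dim V > n_+(A)$, then $\dim V + \dim V_{\leq 0} > n$ would force a nonzero vector in $V \cap V_{\leq 0}$, on which the form is simultaneously $> 0$ and $\leq 0$, a contradiction.

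With the characterization available, the invariance is immediate. For every $y$ I would substitute $x = Q^t y$ and note
\[
y^t B y = (Q^t y)^t A (Q^t y) = x^t A x,
\]
so the bijective linear map $y \mapsto Q^t y$ carries each subspace on which the $B$-form is positive to one of the same dimension on which the $A$-form is positive, and conversely via $(Q^t)^{-1}$. Taking the maximum over such subspaces gives $n_+(B) = n_+(A)$, and combining with the first paragraph completes the argument.

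The only genuinely substantive step is the variational characterization, and inside it the upper bound---that a positive-definite subspace cannot exceed dimension $n_+(A)$. The dimension-intersection argument is the crux, and everything else is bookkeeping. I would deliberately avoid the alternative approach of joining $Q$ to the identity by a path of nonsingular matrices and invoking continuity of eigenvalues, since $GL_n(\RR)$ is disconnected and that route needs extra care to handle the two components; the subspace argument sidesteps the issue cleanly.
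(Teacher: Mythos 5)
Your proof is correct, but note that the paper does not prove this statement at all: Sylvester's Law of Inertia is stated there purely as classical background (``We recall the well known Sylvester's law of inertia'') and is then invoked in the proof of the inertia theorem for $\Max_T$. So there is no paper proof to compare against; what you have supplied is the standard textbook argument, and it is complete. Each step checks out: congruence by a nonsingular $Q$ preserves rank, hence nullity (using that for a real symmetric matrix the number of zero eigenvalues equals the nullity); the variational characterization of $n_+$ is proved correctly, with the spectral decomposition giving the lower bound and the dimension-count $\dim V + \dim V_{\leq 0} > n$ forcing a contradictory nonzero vector for the upper bound; and the substitution $x = Q^t y$ gives a dimension-preserving bijection between positive-definite subspaces for $B = QAQ^t$ and for $A$, in both directions since $A = Q^{-1}B(Q^{-1})^t$. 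Your closing remark is also well taken: the subspace argument avoids the connectivity subtleties of a homotopy/eigenvalue-continuity proof. The one thing you might state explicitly is the identity $n_-(A) = n_+(-A)$ when you dispatch the negative index via $-A$, but this is immediate from the spectral theorem and does not constitute a gap.
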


The main result of this Section is the following where we determine
the inertia of $\Max_T$.

\begin{theorem}
\label{thm:inertia_max4pc}
Let $T$ be a tree on $n$ vertices with $p$ leaves. Then, the
inertia of $\Max_T$ is
$$
\text{Inertia}(\Max_T) = (n_0,n_+,n_-) = 
\left(\binom{n}{2}-2(n-p),\: n-p,\: n-p\right).$$
\end{theorem}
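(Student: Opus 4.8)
The plan is to induct on $n$ and reuse the congruence reductions already performed in the proof of Theorem~\ref{thm:rank}, invoking Sylvester's Law of Inertia (Theorem~\ref{th syl-iner}). First I would dispose of the nullity: by Theorem~\ref{thm:rank}, $\rk(\Max_T) = 2(n-p)$, so $n_0 = \binom{n}{2} - 2(n-p)$ and hence $n_+ + n_- = 2(n-p)$. It therefore suffices to show $n_+ = n_- = n-p$. For the base case $n=3$ the only tree is $P_3$ with $p = 2$; here the two nonzero eigenvalues of $\Max_{P_3}$ multiply to $-2$ (the sum of its three $2 \times 2$ principal minors), so they have opposite signs and the inertia is $(1,1,1) = \big(\binom{3}{2}-2,\,1,\,1\big)$, as required.

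The crux of the inductive step is that every operation in the proof of Theorem~\ref{thm:rank} is a matched row-and-column operation, that is, a congruence $\Max_T \mapsto E\,\Max_T\,E^t$; by Sylvester's law each such operation preserves the inertia. Moreover, each operation alters only rows and columns indexed by pairs containing the leaf $n$, using as sources only pairs avoiding $n$, so the top-left block stays equal to $\Max_{\widehat T}$ throughout, where $\widehat T = T - n$. Consequently the inertia of $\Max_T$ equals that of the reduced block form, and I only need to read off the contribution of the part outside $\Max_{\widehat T}$ in each of the two cases of that proof, while tracking how $n$ and $p$ change under the leaf deletion.

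In Case~I (the quasi-pendant $n-1$ has degree two), the final displayed matrix there shows $\Max_T$ congruent to $\Max_{\widehat T} \oplus \0 \oplus C$, where $C = \left[\begin{smallmatrix} 0 & 2 \\ 2 & 2 \end{smallmatrix}\right]$ is the decoupled block on the coordinates $\{n-2,n\}$ and $\{n-1,n\}$. Since $\det C = -4 < 0$, the inertia of $C$ is $(0,1,1)$, so the reduction contributes one positive and one negative eigenvalue beyond those of $\Max_{\widehat T}$. Here deleting the leaf $n$ turns $n-1$ into a new leaf, so $\widehat T$ has $n-1$ vertices and $p$ leaves; by the induction hypothesis $n_\pm(\Max_{\widehat T}) = (n-1)-p$, whence $n_\pm(\Max_T) = (n-1-p)+1 = n-p$. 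In Case~II (every quasi-pendant has degree at least three), the reduction shows $\Max_T$ congruent to $\Max_{\widehat T} \oplus \0$, so the part outside $\Max_{\widehat T}$ contributes nothing to $n_+$ or $n_-$. Now deleting the leaf $n$ leaves the twin leaf $n-1$ in place and keeps $n-2$ of degree at least two, so $\widehat T$ has $n-1$ vertices and $p-1$ leaves; by induction $n_\pm(\Max_{\widehat T}) = (n-1)-(p-1) = n-p$, giving $n_\pm(\Max_T) = n-p$ at once. Together with the nullity count this closes the induction.

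I expect the only delicate point to be verifying that the $2 \times 2$ block $C$ in Case~I is genuinely decoupled from $\Max_{\widehat T}$ and from the zero rows and columns, and that it reads $\left[\begin{smallmatrix} 0 & 2 \\ 2 & 2 \end{smallmatrix}\right]$ rather than the diagonal $\left[\begin{smallmatrix} -2 & 0 \\ 0 & 2 \end{smallmatrix}\right]$ appearing in the Smith Normal Form argument; the latter form is reached by one-sided operations that need not preserve inertia, whereas the congruent form $C$ is exactly the bottom-right corner of the reduced matrix in Case~I of Theorem~\ref{thm:rank}. Once this is in hand, no further computation is needed: everything reduces to the sign of $\det C$ and to the bookkeeping of $n$ and $p$ under leaf deletion.
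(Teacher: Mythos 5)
Your proof is correct, and it takes a genuinely different route from the paper's. The paper never goes back to the reductions of Theorem~\ref{thm:rank}: it inducts on $n$ to show that the principal submatrix $\Max_T[B,B]$ on an algorithmic basis $B$ of Lemma~\ref{rem:algo} has inertia $(0,n-p,n-p)$, combining the recursion $\det\Max_T[B,B]=-4\det\Max_{T-n}[\widehat B,\widehat B]$ from the proof of Theorem~\ref{th: det}, item (c), with the rank jump of two from Theorem~\ref{thm:rank} (two new nonzero eigenvalues with negative determinant ratio must be one of each sign), and then transfers the count to $\Max_T$ by congruence and Theorem~\ref{th syl-iner}. You instead run the induction on the full matrix, reusing the Case I/Case II congruence reductions of Theorem~\ref{thm:rank} exactly as the paper's own SNF proof (Theorem~\ref{thm:snf-max-4pc}) does; your base case, your leaf bookkeeping ($\widehat T$ has $p$ leaves in Case I and $p-1$ in Case II), and the sign argument via $\det C=-4<0$ are all correct. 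Your route needs none of Section~\ref{sec:basis}, parallels the SNF proof, and incidentally avoids the one misstep in the paper's proof (the claim that an \emph{orthogonal} $Q$ gives $Q\Max_T Q^t=\Max_T[B,B]\oplus\0$; such a $Q$ is in general only nonsingular, which is all that Sylvester's law requires). The price is that you lean on the displays in Case I of Theorem~\ref{thm:rank}, the least carefully executed part of the paper: the operations listed there leave the rows $\rrow_{i,n}$, $i\neq n-2,n-1$, untouched (after the first round of operations they all coincide with $\rrow_{n-2,n}$ and must still be subtracted off), and they do not clear the $\V_{n-1}$-part of $\rrow_{n-1,n}$, which need not lie in the row space of $\Max_{\widehat T}$ and has to be eliminated instead with real multiples of the pivot row $\rrow_{n-2,n}=(0,\ldots,0,2)$. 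This is precisely the delicate point you flagged; carrying out these extra matched row-and-column operations keeps the top-left block equal to $\Max_{\widehat T}$ and keeps $C=\left[\begin{smallmatrix}0&2\\2&2\end{smallmatrix}\right]$ intact, so your induction closes as written.
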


\begin{proof}
By induction on $n$, we first prove that if $B$ is a basis for the row space of $\Max_T$ obtained by applying Lemma \ref{rem:algo} then
$\text{Inertia}(\Max_T[B,B]) = (0,n-p,n-p)$. 

If $T$ is tree on $n<4$ vertices then the result can be verified easily. 
Now notice that if  two leaves $u$ and $v$ of $T$ have a common neighbor,
then by Corollary \ref{cor:remove extra leaf}, we have
$$
\rk(\Max_T) =\rk(\Max_{T-u})=\rk(\Max_{T-v}).
$$
Hence, the result follows by applying induction hypothesis on the tree 
$T-u$. We thus assume that $T$ is a tree such that every 
quasi-pendant vertex of $T$ is adjacent to exactly one leaf.  
Let $B$ be a basis of the row space of $\Max_T$ 
obtained by applying Lemma \ref{rem:algo}.

Without loss of generality, assume that $n$ is a leaf adjacent to 
$n-1$ with  $\{n,n-1\}\in B$  but $\{n,n-2\}\notin B$ where  
$n-2$ is a neighbour of $n-1$. 
We first compute $\text{Inertia}(\Max_T[B, B])$.
The proof of Item c of Theorem \ref{th: det} gives
$\det \Max_T[B,B]=-4 \det\Max_{T-n}[\widehat B,\widehat B]$, 
where $\widehat B$ is the basis for the row space of $\Max_{T-n}$ 
obtained by applying Lemma \ref{rem:algo} on $T-n$ in 
the same sequence as applied to get $B$.
	
Clearly, by Theorem \ref{thm:rank}, $\rk(\Max_T)=\rk(\Max_{T-n})+2$ 
and so the number of nonzero eigenvalues of $\Max_{T}[B, B]$ is 
two more than of $\Max_{T-n}[\widehat B, \widehat B]$.  Since 
the product of	$\det \Max_T[B,B]$ and 
$\det\Max_{T-n}[\widehat B,\widehat B]$ is negative, the 
number of positive eigenvalues of $\Max_{T}[B, B]$ is exactly one 
more than that of $\Max_{T-n}[\widehat B,\widehat B]$.  
This argument also gives the result on the number 
of negative eigenvalues of  $\Max_{T}[B, B]$.
Hence, 
by the induction hypothesis, $\text{Inertia}(\Max_T[B,B])= (0,n-p,n-p)$.
	
Since $\Max_{T}$ is a real symmetric matrix, there exists an 
orthogonal matrix $Q$ such that $Q\Max_T Q^t = \begin{bmatrix}
\Max_T[B,B] & \0 \\\0&\0
\end{bmatrix}$. Hence, the result holds by applying Theorem \ref{th syl-iner}. 
\end{proof}

In our final result, we explicitly describe the eigenvalues 
of $\Max_T$ when $T$ is a star tree. 

\begin{theorem}
Let $S_n$ be the star tree on $n$ vertices. Then, we have
$$
\det(xI-\Max_{S_n}) = x^{\binom{n}{2}-2}\left(x^2-2(n-1)^2x- (n-1)\binom{n-1}{2}\right),
$$
and the nonzero eigenvalues of $\Max_{S_n}$ are
$$
(n-1)^2 \pm \sqrt{(n-1)^4+ (n-1)\binom{n-1}{2}}.
$$
\end{theorem}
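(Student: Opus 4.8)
The plan is to use the block form of $\Max_{S_n}$ from Lemma~\ref{lem:star tree} to exhibit $\Max_{S_n}$ as a rank-two matrix, and then to reduce its spectrum to that of an explicit $2\times 2$ matrix. Take the central vertex to be $1$, and partition the $\binom{n}{2}$ pairs into $\V_1=\{\{1,i\}:2\le i\le n\}$ (of size $n-1$) and $\V_2=\{\{j,k\}:2\le j<k\le n\}$ (of size $\binom{n-1}{2}$). By Lemma~\ref{lem:star tree},
$$
\Max_{S_n}=\begin{bmatrix} 2J_1 & 3J_2\\ 3J_2^t & 4J_3\end{bmatrix},
$$
with all-ones blocks $J_1,J_2,J_3$. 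Letting $u,v$ be the $0/1$ indicator column vectors of $\V_1$ and $\V_2$ inside the full index set, I would observe that $uu^t$, $uv^t+vu^t$, and $vv^t$ reproduce exactly the three blocks, so that
$$
\Max_{S_n}=U\,C\,U^t,\qquad U=\begin{bmatrix} u & v\end{bmatrix},\quad C=\begin{bmatrix} 2 & 3\\ 3 & 4\end{bmatrix}.
$$

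Since $u$ and $v$ have disjoint supports, $U$ has full column rank $2$ and $U^tU=\mathrm{diag}\!\big(n-1,\binom{n-1}{2}\big)$. The key step is the standard identity that the nonzero eigenvalues of $U(CU^t)$ coincide with those of $(CU^t)U=C\,U^tU$; equivalently, any eigenvector of $\Max_{S_n}$ for a nonzero eigenvalue lies in the column space of $U$, is of the form $Uy$, and satisfies $(C\,U^tU)y=\lambda y$. Hence the two possibly nonzero eigenvalues of $\Max_{S_n}$ are the eigenvalues of
$$
C\,U^tU=\begin{bmatrix} 2(n-1) & 3\binom{n-1}{2}\\ 3(n-1) & 4\binom{n-1}{2}\end{bmatrix}.
$$
A short simplification gives trace $2(n-1)+4\binom{n-1}{2}=2(n-1)^2$ and determinant $8(n-1)\binom{n-1}{2}-9(n-1)\binom{n-1}{2}=-(n-1)\binom{n-1}{2}$, so this $2\times 2$ matrix has characteristic polynomial $x^2-2(n-1)^2x-(n-1)\binom{n-1}{2}$. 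Because $\rk(\Max_{S_n})=2$ (by Theorem~\ref{thm:rank}, or directly from the factorization), the remaining $\binom{n}{2}-2$ eigenvalues are $0$, which yields the stated $\det(xI-\Max_{S_n})$. Solving the quadratic by the quadratic formula then gives the two nonzero eigenvalues $(n-1)^2\pm\sqrt{(n-1)^4+(n-1)\binom{n-1}{2}}$.

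I do not expect a serious obstacle: once the factorization $\Max_{S_n}=UCU^t$ is recognized, the whole problem is two-dimensional. The only points needing a little care are the justification that the nonzero spectrum of $\Max_{S_n}$ equals that of $C\,U^tU$ (the $AB$/$BA$ eigenvalue identity, legitimate here because $U$ has full column rank), and the algebraic collapse of the trace to the clean value $2(n-1)^2$ using $4\binom{n-1}{2}=2(n-1)(n-2)$.
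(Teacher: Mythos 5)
Your proof is correct, but it reaches the quadratic by a genuinely different mechanism than the paper. The paper works directly on the big matrix: knowing $\rk(\Max_{S_n})=2$, so that there are only two nonzero eigenvalues $\lambda,\mu$, it reads off $\lambda+\mu$ as the trace and $\lambda\mu$ as the sum of all $2\times 2$ principal minors --- the latter equals $-(n-1)\binom{n-1}{2}$ because principal minors taken within a single block vanish, while each of the $(n-1)\binom{n-1}{2}$ mixed pairs contributes $\det\begin{bmatrix}2&3\\3&4\end{bmatrix}=-1$. You instead factor $\Max_{S_n}=UCU^t$ with $U$ the $\binom{n}{2}\times 2$ matrix of indicator columns and $C=\begin{bmatrix}2&3\\3&4\end{bmatrix}$, then use the $AB$/$BA$ identity to transfer the nonzero spectrum to the $2\times 2$ matrix $CU^tU$; in its strong form $\det(xI_m-AB)=x^{m-2}\det(xI_2-BA)$ this hands you the full characteristic polynomial in one stroke. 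Your route buys self-containedness: rank $2$ and the zero eigenvalues fall out of the factorization itself, with no appeal to Theorem \ref{thm:rank} or to the principal-minor count that the paper asserts without detail; the paper's route is shorter given that the rank and block structure were already established in Lemma \ref{lem:star tree}. One small correction: the $AB$/$BA$ nonzero-spectrum identity requires no full-column-rank hypothesis on $U$ --- it holds for arbitrary rectangular matrices of compatible sizes --- so that caveat in your write-up is unnecessary (though harmless, since $U$ does indeed have rank $2$).
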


\begin{proof}
Clearly, $\rk(\Max_{S_n})=2$. Let $\lambda$ and $\mu$ be the
two nonzero eigenvalues of $\Max_{S_n}$. Now note that
$$
\Max_{S_n} = \begin{bmatrix}
2J_{(n-1)\times (n-1)} & 3J_{(n-1)\times \binom{n-1}{2}}\\
3J_{\binom{n-1}{2}\times (n-1)} & 4J_{\binom{n-1}{2}\times \binom{n-1}{2}}
\end{bmatrix}.
$$
Therefore, $\lambda+\mu = 2(n-1)+4\binom{n-1}{2} = 2(n-1)^2$. Further, 
note that the sum of all $2\times 2$  principal minors of $\Max_{S_n}$ is
$
- (n-1)\binom{n-1}{2}
$. It follows that
$$
\lambda\mu = -(n-1)\binom{n-1}{2}.
$$	
Solving the quadratic gives us the two individual roots.
Further, the characteristic polynomial of $\Max_{S_n}$ is given by
$$
x^{\binom{n}{2}-2}\left(x^2-2(n-1)^2x- (n-1)\binom{n-1}{2}\right).
$$
This completes the proof.
\end{proof}

\comment{
\begin{theorem}
Let $T$ be the star tree on $n$ vertices. Then, we have 
$$
\det(xI-\Max_T) = x^{\binom{n}{2}-2}\left(x^2-2(n-1)^2x-	(n-1)-\binom{n-1}{2}\right),
	$$
	and thus the nonzero eigenvalues of $\Max_T$ are 
	$$
	(n-1)^2 \pm \sqrt{(n-1)^4+ (n-1)\binom{n-1}{2}}.
	$$
\end{theorem}

\begin{proof}
	Clearly, $\rk(\Max_T)=2$. Let $\lambda$ and $\mu$ be two nonzero eigenvalues of $\Max_T$. Now note that 
	$$
	\Max_T = \begin{bmatrix}
		2J_{(n-1)\times (n-1)} & 	3J_{(n-1)\times \binom{n-1}{2}}\\
		3J_{\binom{n-1}{2}\times (n-1)} & 	4J_{\binom{n-1}{2}\times \binom{n-1}{2}}
	\end{bmatrix}.
	$$
	Therefore, $\lambda+\mu = 2(n-1)+2(n-1)(n-2) = 2(n-1)^2$. Further, note that the sum all  $2\times 2$  principal minors of $\Max_T$ is 
	$
	- (n-1)+\binom{n-1}{2}
	$. It follows that 
	$$
	\lambda\mu = -(n-1)-\binom{n-1}{2}.
	$$
	Therefore, the characteristic polynomial of $\Max_T$ is given by 
	$$
	x^{\binom{n}{2}-2}\left(x^2-2(n-1)^2x-	(n-1)-\binom{n-1}{2}\right).
	$$ 
	This completes the proof.
\end{proof}
}

\end{document}